\begin{document} 

 \theoremstyle{plain} 
 \newtheorem{theorem}{Theorem}[section] 
 \newtheorem{lemma}[theorem]{Lemma} 
 \newtheorem{corollary}[theorem]{Corollary} 
 \newtheorem{proposition}[theorem]{Proposition} 

\theoremstyle{definition} 
\newtheorem*{definition}{Definition}
\newtheorem{example}[theorem]{Example}
\newtheorem{remark}[theorem]{Remark}

\title[Bounds on ideals]{Bounds on the number of ideals in finite 
  commutative nilpotent $\Fp$-algebras}
	

\author{Lindsay N. Childs}
\email{lchilds@albany.edu}
\author{Cornelius Greither}
\email{cornelius.greither@unibw.de}

\date{\today}

\newcommand{\QQ}{\mathbb{Q}} 
\newcommand{\FF}{\mathbb{F}} 
\newcommand{\ZZ}{\mathbb{Z}}
\newcommand{\ZZm}{\mathbb{Z}/m\mathbb{Z}}
\newcommand{\ZZp}{\mathbb{Z}/p\mathbb{Z}}
\newcommand{\ee}{\end{eqnarray}} 
\newcommand{\ben}{\begin{eqnarray*}} 
\newcommand{\een}{\end{eqnarray*}} 
\newcommand{\dis}{\displaystyle} 
\newcommand{\beal}{\[ \begin{aligned}} 
\newcommand{\eeal}{ \end{aligned} \]} 
\newcommand{\lb}{\lambda} 
\newcommand{\Gm}{\Gamma} 
\newcommand{\bpm}{\begin{pmatrix}} 
\newcommand{\epm}{\end{pmatrix}} 
\newcommand{\Fp}{\mathbb{F}_p} 
\newcommand{\Fpx}{\mathbb{F}_p^{\times}} 
\newcommand{\Ann}{\text{Ann} }
\newcommand{\tb}{\textbullet \ }
\newcommand{\GL}{\mathrm{GL}}
\newcommand{\gb}{\genfrac{[}{]}{0pt}{}}
\newcommand{\M}{\mathrm{M}}
\newcommand{\Aut}{\mathrm{Aut}}
\newcommand{\End}{\mathrm{End}}
\newcommand{\Perm}{\mathrm{Perm}}
\newcommand{\PGL}{\mathrm{PGL}}
\newcommand{\diag}{\mathrm{diag}}
\newcommand{\Reg}{\mathrm{Reg}}

\parskip=5pt plus5pt

\begin{abstract} Let $A$ be a finite commutative nilpotent $\Fp$-algebra structure 
on $G$, an elementary abelian group of order $p^n$.  If $K/k$ is a Galois extension 
of fields with Galois group $G$ and $A^p = 0$, then corresponding to $A$ is an 
$H$-Hopf Galois structure on $K/k$ of type $G$.  For  that Hopf Galois structure 
we may study the image of the Galois correspondence from $k$-subHopf algebras 
of $H$ to subfields of $K$ containing $k$ by utilizing the fact 
that the intermediate subfields correspond to the $\Fp$-subspaces of $A$, while 
the subHopf algebras of $H$ correspond to the ideals of $A$.  We obtain upper 
and lower bounds on the proportion of subspaces of $A$ that are ideals of $A$, and test the bounds on some examples.
\end{abstract}

\maketitle


\section*{Introduction}

The motivation for this work is to understand the Galois correspondence for 
certain Hopf Galois structures on field extensions. 

Let $K/k$ be a Galois extension of fields with Galois group $G$.  Then the Galois correspondence sending 
subgroups $G'$ of $G$ to subfields $K^{G'}$ of $K$ containing $k$ is, by the Fundamental Theorem of Galois Theory,  a bijective correspondence from subgroups of $G$ onto the intermediate fields between $k$ and $K$.    

In 1969 S. Chase and M. Sweedler \cite{CS69} defined the concept of a Hopf Galois 
extension of fields for a field extension $K/k$ and $H$ a $k$-Hopf algebra 
acting on $K$ as an $H$-module algebra.  They proved a weak version of the 
FTGT, namely, that there is an injective  Galois correspondence from 
$k$-subHopf algebras $H'$ of $H$ to intermediate fields, given by 
$H' \mapsto K^{H'}$, the subfield of elements fixed under the action of $H'$. 
But surjectivity was not obtained.  Greither and Pareigis \cite{GP87} defined 
a class of non-classical Hopf Galois structures, the "almost classical" 
structures, for which surjectivity holds, but also  gave an example 
where it fails.   Recent work of Crespo, Rio and Vela  (\cite{CRV15} and 
especially \cite{CRV16}) studied the image of the Galois correspondence for 
Hopf Galois structures on separable extensions $K/k$ with normal closure 
$\tilde{K}$ and found numerous examples where surjectivity fails.  
In nearly all of  the cases examined in \cite{CRV16} the Galois group of  
$\tilde{K}/K$ is non-abelian.  

In this paper we seek to quantify the failure of the FTGT for Hopf Galois 
structures of the following type.

Let $K/k$ be a Galois extension of fields with Galois group $G$, an elementary 
abelian $p$-group of order $p^n$. Suppose $H$ is a $k$-Hopf algebra of 
type $G$ (that means, $K \otimes_k H \cong KG$), and $K/k$ is a $H$-Hopf 
Galois extension.  As shown in \cite{Ch15}, \cite{Ch16}, 
\cite{Ch17}, building on work of 
\cite{CDVS06} and \cite{FCC12}, every $H$-Hopf Galois structure  of type $G$ on a 
Galois extension of fields $K/k$ with Galois group $G$, an elementary 
abelian $p$-group, arises from a commutative nilpotent $\Fp$-algebra 
structure $A$ on the additive group $G$ with $A^p = 0$.   In \cite{Ch17},  
it was shown that the sub-$K$-Hopf algebras of $H$ correspond to ideals 
of $A$.  For a Galois extension $K/k$ whose Galois group is an elementary 
abelian $p$-group (or equivalently, an $\Fp$-vector space), the classical 
FTGT gives a bijection between $\Fp$-subspaces of $G$ and intermediate fields.   So let $i(A)$ denote the number of ideals of $A$, and $s(A)$ the number of $\Fp$- subspaces of $A$.  Then the proportion of intermediate fields $k \subseteq E\subseteq K$ that are in the image of the Galois correspondence  for a $H$-Hopf Galois structure on $K/k$ arising from $A$ is equal to $i(A)/s(A)$.

As observed in \cite{Ch17}, that comparison  implies immediately that if $A^2 \ne 0$, then there are subspaces of $A$ that are not ideals, and hence the Galois correspondence cannot be surjective.   

Let $e$ be the unique integer such that $A^e \ne 0$ and
$A^{e+1} = 0$; we assume throughout that $e>0$ (that is, $A$ is not zero)
and $e < p$.  To quantify the failure of surjectivity of the FTGT for a Hopf Galois 
structure corresponding to $A$, we obtain in section 2 of this paper a general 
upper bound, depending only on $e$, on the ratio $i(A)/s(A)$. The upper bound implies, for example, that for $e \ge 3$ and $p \ge 17$, $i(A)/s(A) < 0.01$.  

Using information on the dimensions of the annihilator ideals of $A$, we obtain 
in section 3 a lower bound on $i(A)$.  

The upper bound is based on a lower bound on the fibers of the ``ideal generated by'' function $G$ from subspaces of $A$ to ideals of $A$.  In the final section we examine that lower bound on fibers of $G$, and the inequalities of sections 2 and 3, for some examples.  

Let $s(n)$ denote the number of subspaces of  an $\Fp$-vector space of 
dimension $n$.  Then $s(n)$ is a sum of Gaussian binomial coefficients, 
also called $q$-binomial coefficients (where $q = p$).  The first section 
of the paper describes properties of these coefficients and obtains 
inequalities relating $s(m)$ and $s(n)$ for $m < n$.  
  
Throughout the  paper, we assume that  $A$ has dimension $n$ and that $A^p = 0$.  
Recall that $e$ is the largest number so that $A^e \ne 0$ (so $A^{e+1} = 0$).  
All vector spaces are over $\Fp$.  

Our thanks go to the University of Nebraska at Omaha and to Griff Elder 
for their hospitality and support.


\section{Gaussian binomial coefficients}

To compare the number of ideals of a commutative nilpotent $\Fp$-algebra $A$ 
with the number of subspaces of $A$, we need to collect some information 
concerning the number of subspaces of dimension $k$ of an $\Fp$-vector space 
of dimension $n$.  So we begin with Gaussian binomial coefficients.  

The Gaussian binomial coefficient, or $q$-binomial coefficient (here $q = p$), 
is defined 
as
\beal\gb{n}{k} 
&= \frac{(p^n-1)(p^n-p) \cdots (p^n-p^{k-1})}{(p^k-1)(p^k-p) \cdots (p^k-p^{k-1})}\\
&= \frac{(p^n-1)(p^{n-1} -1) \cdots (p^{n-(k-1)} -1)}{(p-1)(p^2-1) \cdots (p^k- 1)}.
\eeal
It counts the number of $k$-dimensional subspaces of $\Fp^n$.  So 
\[ \gb{n}{k} = \gb{n}{n-k} \text{ for all }k,\]
$\gb{n}{ 0}= \gb{n}{n}=1$, and $\gb{n}{ k} = 0$ for $k > n$.  Then
\[s(n) = \sum_{k=0}^n \gb{n}{k}\]
is the total number  of subspaces of $\Fp^n$.    
Note that  it suffices to replace the factors $(p^n- p^r)/(p^k-p^r)$ by $p^n/p^k$ 
in order to see that
\[ \gb{n}{k}  \ge p^{k(n-k)},\]
and that $\gb{n}{k}$ has order of magnitude $p^{k(n-k)}$ for large $p$.  

(In fact, the rational function 
   \[\gb{n}{k}_x = \frac{(x^n-1)(x^n-x) \cdots (x^n-x^{k-1})}{(x^k-1)(x^k-x) 
   \cdots (x^k-x^{k-1})}\]
is a polynomial of degree $(n-k)k$ in $\ZZ[x]$.  For let $b(x), a(x)$ be the 
numerator and denominator of $\gb{n}{k}_x$.  Both are monic polynomials 
in $\ZZ[x]$.  Dividing $b(x)$ by $a(x)$ in $\QQ[x]$ gives
\[ b(x) = a(x)q(x) + r(x),\]
where $\deg(r(x)) < \deg(a(x))$. Since $a(x)$ is monic, $q(x)$ and $r(x)$ are 
in $\ZZ[x]$. Now $b(p)/a(p) = \gb{n}{k}_p$ is a positive integer for every 
prime $p$, so the rational function $r(p)/a(p)$ is also an integer for every 
prime $p$.  But 
\[\lim_{p \to \infty} \frac {r(p)}{a(p)} = 0.\]
So $r(p) = 0$ for all primes greater than some fixed bound, and hence $r(x) = 0$.  
So $b(x)/a(x) = q(x)$ is in $\ZZ[x]$.) 

The Gaussian binomial coefficients satisfy two recursive formulas, analogous 
to that satisfied by the usual binomial coefficients:
\beal \gb{n}{k} &= \gb{n-1}{k-1} + p^k\gb{n-1}{k}\\
&= \gb{n-1}{k}+ p^{n-k}\gb{n-1}{k-1}.\eeal

Using properties of the Gaussian binomial coefficients, we will now obtain 
some inequalities relating the number of subspaces of $\Fp$-vector spaces 
of dimensions $n, n-1$ and $n-2$ for all $n$.  

Let $\delta(n) = \lfloor \frac {n^2}{4}\rfloor = \begin{cases}
  n^2/4 &\text{ if $n$ is even}\\
  (n^2 -1)/4 &\text{ if $n$ is odd}.
\end{cases} $ 

\bigskip

\begin{lemma}\label{4.1}
\noindent
 \begin{itemize}
   \item[a)] For all $n\ge 2$ we have $s(n) \ge p^{n-1}s(n-2)$.
	  \vskip2pt
	 \item[b)] If $n>1$ is even, then $s(n) \ge \frac{1}{2}p^{n/2}s(n-1)$.
	\vskip2pt
	 \item[c)] If $n>0$ is odd, then $s(n) \ge p^{(n-1)/2}s(n-1)$.
	\vskip2pt
	 \item[d)] For $n\ge m\ge 0$ arbitrary, we have $s(n) \ge 
	     \frac{1}{2}p^{\delta(n)-\delta(m)} s(m)$.
	      The factor $1/2$ may be omitted if $m$ and $n$ have 
				the same parity or if $n$ is even.
 \end{itemize}
\end{lemma}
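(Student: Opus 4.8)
The plan is to reduce parts b) and c) to a single positivity statement, to prove a) by a sharper term-by-term comparison, and then to obtain d) by telescoping. The engine is the identity produced by summing the recursion $\gb{n}{k}=\gb{n-1}{k-1}+p^{k}\gb{n-1}{k}$ over $k$, namely
\[ s(n)=s(n-1)+T(n-1),\qquad T(n-1):=\sum_{k=0}^{n-1}p^{k}\gb{n-1}{k}. \]
With this, for $n$ odd and $m=(n-1)/2$ the claim $s(n)\ge p^{m}s(n-1)$ in c) is equivalent to $\sum_{k=0}^{n-1}\big(p^{k}-p^{m}+1\big)\gb{n-1}{k}\ge 0$. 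Here $n-1=2m$ is even, so I would pair $k$ with $2m-k$, use $\gb{n-1}{k}=\gb{n-1}{2m-k}$ and the AM--GM bound $p^{k}+p^{2m-k}\ge 2p^{m}$: every pair then contributes at least $2$ and the central index $k=m$ contributes $1$. For $n$ even and $m=n/2$, b) reduces in the same way to $\sum_{k}\big(p^{k}-\tfrac12 p^{m}+1\big)\gb{n-1}{k}\ge 0$; now $n-1$ is odd, the indices pair with no central term, and the smallest value $p^{m-1}+p^{m}$ of $p^{k}+p^{n-1-k}$ exceeds $p^{m}$ but not $2p^{m}$. This asymmetry is exactly why b) carries a $\tfrac12$: repeating the computation with $p^{m}$ in place of $\tfrac12 p^{m}$ yields a negative central pair once $p\ge 2$, so the constant in b) is optimal.

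For a) I would compare term by term. Cancelling common factors gives, for $1\le k\le n-1$,
\[ \frac{\gb{n}{k}}{\gb{n-2}{k-1}}=\frac{(p^{n}-1)(p^{n-1}-1)}{(p^{n-k}-1)(p^{k}-1)}, \]
and writing $a=n-k$, $b=k$ (so $a+b=n$ and $a,b\ge 1$) the inequality $\gb{n}{k}\ge p^{n-1}\gb{n-2}{k-1}$ becomes $p^{\,n-1+a}+p^{\,n-1+b}\ge p^{n}+2p^{n-1}$, which holds because each of $n-1+a$ and $n-1+b$ is at least $n$ and $p\ge 2$. Summing over $1\le k\le n-1$, and using $\sum_{k=1}^{n-1}\gb{n}{k}\le s(n)$ together with $\sum_{k=1}^{n-1}\gb{n-2}{k-1}=s(n-2)$, yields $s(n)\ge p^{n-1}s(n-2)$. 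It is worth noting that merely composing b) and c) would give only $s(n)\ge\tfrac12 p^{n-1}s(n-2)$, so this sharper term-level estimate is what removes the $\tfrac12$ from a).

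Finally, d) telescopes a), b), c). The exponents are compatible: $\delta(n)-\delta(n-2)=n-1$, while $\delta(n)-\delta(n-1)$ equals $n/2$ for even $n$ and $(n-1)/2$ for odd $n$, matching the factors produced by a), b), c) exactly, so the powers of $p$ multiply out to $p^{\delta(n)-\delta(m)}$ along any admissible chain of steps. The delicate point, and the step I expect to be the main obstacle, is the bookkeeping of the factors $\tfrac12$. If $m$ and $n$ have the same parity I would descend from $n$ to $m$ using only steps of type a), none of which costs a factor. If the parities differ, exactly one single step is required, and I would arrange for the parity change to occur at an odd dimension, i.e.\ as an application of c), which is free; this is possible precisely when $m$ is even. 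The one situation in which a step of type b) is forced is a descent from an even dimension $n$ to an odd $m$, where a single factor $\tfrac12$ genuinely remains. Thus the argument gives $s(n)\ge\tfrac12 p^{\delta(n)-\delta(m)}s(m)$ in general, with the $\tfrac12$ removable exactly when the parities agree or $m$ is even.
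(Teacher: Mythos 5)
Your proof is correct, and for parts b) and c) it takes a genuinely different route from the paper. The paper proves b) and c) by splitting the partial sum $\gb{n}{1}+\dots+\gb{n}{k}$ using the two recursions and, in the odd case, the symmetry trick $\gb{2k}{k+1}=\gb{2k}{k-1}$ to recover the missing terms; you instead sum one recursion into the identity $s(n)=s(n-1)+\sum_k p^k\gb{n-1}{k}$ and then exploit the pairing $k\leftrightarrow (n-1)-k$ together with $\gb{n-1}{k}=\gb{n-1}{n-1-k}$ and AM--GM. Your route is arguably more transparent and pinpoints why the constant $\tfrac12$ appears exactly when the row being paired has no central term. For a), the term-by-term inequality $\gb{n}{k}\ge p^{n-1}\gb{n-2}{k-1}$ that you verify by clearing denominators is precisely the inequality the paper obtains by chaining the two recursions, and both arguments then sum over $k$; your remark that a) must be proved directly (composing b) and c) would lose a factor $\tfrac12$) reflects the actual structure of the paper's proof. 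Part d) is the same telescoping as in the paper. One caveat: your ``optimality'' remark for b) only shows that your pairing method cannot produce the constant $1$; by itself it does not show the inequality $s(n)\ge p^{n/2}s(n-1)$ fails. It does fail, e.g.\ for $p=3$, $n=4$: there $s(4)=212$ while $p^{2}s(3)=9\cdot 28=252$.

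That example also exposes a discrepancy in the lemma as stated: it says the factor $\tfrac12$ may be omitted ``if $m$ and $n$ have the same parity or if $n$ is even'', but what you prove --- and what the paper's own proof of d) actually establishes --- is that it may be omitted when the parities agree or when $n$ is odd (equivalently, when $m$ is even), since the free parity-changing step is c), which is available only at an odd dimension. The case $n$ even, $m$ odd genuinely requires the $\tfrac12$, as $p=3$, $n=4$, $m=3$ shows: $s(4)=212<252=p^{\delta(4)-\delta(3)}s(3)$. So your bookkeeping is the correct one, and the condition in the statement should read ``$n$ is odd'' (or ``$m$ is even'').
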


\begin{proof} a) Using the two recursion formulas for $\gb{n}{d}$ in turn we find:
\begin{eqnarray*}
  \gb{n}{d} &=&  \gb{n-1}{d-1} +  p^d \gb{n-1}{d} \\
            &=&  \gb{n-1}{d-1} +  p^d(p^{n-1-d} \gb{n-2}{d-1}  +  \gb{n-2}{d})  \\
            &\ge&                p^{n-1}\gb{n-2}{d-1} .
\end{eqnarray*}
Summing these for $d=1,\ldots, n-1$ gives the required inequality. 

b) Let $n=2k$. We may calculate as follows:
\begin{eqnarray*}
  s(n) &\ge&  \gb{n}{1} + \ldots + \gb{n}{k} \\
	   &=& (p^{n-1}\gb{n-1}{0}+\gb{n-1}{1}) + (p^{n-2}\gb{n-1}{1}+\gb{n-1}{2}) + \ldots \\
		 & &  \quad \ldots +   (p^{k}\gb{n-1}{k-1}+\gb{n-1}{k})   \\
		 &\ge & p^{k}\gb{n-1}{0} + p^{k}\gb{n-1}{1} + \ldots p^{k}\gb{n-1}{k-1} \\
		 &=& p^k s(n-1)/2. 
\end{eqnarray*}

c) Let $n=2k+1$. 
Using one recursive formula, then the other, we get:
\beal  
\gb{n}{1} &\ge p^{n-1}\gb{n-1}{0} \ge p^k \gb{n-1}{0}; \\
\gb{n}{2} &\ge p^{n-2}\gb{n-1}{1} \ge p^k \gb{n-1}{1}; \\
& \vdots\\
\gb{n}{k-1} &\ge p^{n-(k-1)}\gb{n-1}{k-2} \ge p^k \gb{n-1}{k-2}; 
\eeal
  (now we switch to the other recursive formula)
\beal
\gb{n}{k} &\ge p^{k}\gb{n-1}{k}; \\
\gb{n}{k+1} &\ge p^{k+1}\gb{n-1}{k+1}; \\
\gb{n}{k+2} &\ge p^{k+2}\gb{n-1}{k+2}\ge p^k\gb{n-1}{k+2}; \\
& \vdots\\
\gb{n}{n-1} &\ge p^{n-1}\gb{n-1}{n-1} \ge p^k \gb{n-1}{n-1}.
\eeal
Now observe that
\[ \gb{n-1}{k+1} = \gb{2k}{k+1} = \gb{2k}{k-1}.\]
Therefore
\[ p^{k+1}\gb{n-1}{k+1} = p^k(2\gb{n-1}{k+1}) = p^k(\gb{n-1}{k+1} + \gb{n-1}{k-1}).\]
Thus $s(n)$ is at least as large as the sum of the left sides of the inequalities, 
which is at least the sum of the right sides of the inequalities, and in view 
of the last observation, the sum of the right sides is at least $p^k s(n-1)$.  

d) We first note that $\delta(n) -  \delta(n-2) = n-1$, so by a),
\[ s(n) \ge p^{n-1}s(n-2) = p^{\delta(n) - \delta(n-2)}s(n-2).\]
Iterating this shows that if $n > m$ and $n \equiv m \pmod{2}$ then
\[ s(n) \ge p^{\delta(n) - \delta(m)}s(m).\]
If $n$ is even and $m$ is odd, then $n/2 = \delta(n) - \delta(n-1)$ and 
by b) we find
\[ s(n) \ge \frac 12 p^{\frac n2}s(n-1)  = \frac 12 p^{\delta(n) - \delta(n-1)}s(n-1),\]
so 
\[ s(n) \ge \frac 12 p^{\delta(n) - \delta(m)}s(m).\]
If $n$ is odd and $m$ is even, then $(n-1)/2 = \delta(n) - \delta(n-1)$, so by c), 
\[s(n) \ge p^{\frac{n-1}2}s(n-1) = p^{\delta(n) - \delta(n-1)}s(n-1),\]
hence
\[ s(n) \ge  p^{\delta(n) - \delta(m)}s(m).\]
 \end{proof}


\section{An upper bound on the number of ideals of $A$}

In this section we obtain a general upper bound for the ratio $i(A)/s(A)$
of  the number of ideals of $A$ to the number of
subspaces of $A$, for $A$ an arbitrary commutative 
nilpotent $\Fp$-algebra of dimension $n$.  To do so, we consider the function 
$G$ from subspaces of $A$ to ideals of $A$ which associates to each subspace $U$
the ideal $G(U)=U+AU$ generated by $U$, and we establish a lower bound 
on the cardinality of the fiber of each ideal under this map (which is 
obviously surjective).  But first, we need to count subspaces with certain 
properties. 
  
Recall that $\delta(t)=\lfloor t^2/4\rfloor$. All vector spaces are over 
$\mathbb F_p$, and the number of $k$-dimensional subspaces of $W$, an 
$\Fp$-vector space of dimension $d$, is $\gb{d}{k}$.  

  We show:

\begin{proposition}\label{2.1}
Let  $\dim(W) = d$  and let $W_0$ be a fixed subspace of $W$ of dimension $r$.    
For $k \le d-r$, the number $s(d, r; k)$  of $k$-dimensional subspaces $U$ 
of $W$ with $U\cap W_0 = (0)$ is equal to $p^{rk}$ times the number of 
$k$-dimensional subspaces of $W/W_0$:
\[  s(d, r; k) = p^{rk}\gb{d-r}{k}.\]
 \end{proposition}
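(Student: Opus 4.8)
The plan is to exploit the quotient map $\pi\colon W \to W/W_0$, whose target has dimension $d-r$. For a $k$-dimensional subspace $U$, the condition $U\cap W_0=(0)$ says exactly that $\pi$ restricts to an injection on $U$, so that $\pi(U)$ is a $k$-dimensional subspace of $W/W_0$ (this also forces $k \le d-r$, matching the hypothesis). I would therefore set up the map $\Phi$ sending each such $U$ to $\pi(U)$, landing in the set of $k$-dimensional subspaces of $W/W_0$, of which there are $\gb{d-r}{k}$. The proposition then reduces to showing that every fiber of $\Phi$ has exactly $p^{rk}$ elements.

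To count a single fiber, fix a $k$-dimensional subspace $V\subseteq W/W_0$. I claim the subspaces $U$ with $\pi(U)=V$ and $U\cap W_0=(0)$ are in bijection with the linear sections of $\pi$ over $V$, i.e. the $\Fp$-linear maps $\sigma\colon V\to W$ with $\pi\circ\sigma=\mathrm{id}_V$: given such a $U$, the restriction $\pi|_U\colon U\to V$ is an isomorphism, and its inverse (followed by the inclusion of $U$) is the unique section with image $U$; conversely, the image of any section is such a $U$. Counting sections is then routine: choose a basis $\bar v_1,\dots,\bar v_k$ of $V$; a section is determined freely by the values $\sigma(\bar v_i)$, each of which may be any of the $p^r$ elements of the coset $\pi^{-1}(\bar v_i)$ of $W_0=\ker\pi$. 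This yields $(p^r)^k=p^{rk}$ sections, hence $p^{rk}$ subspaces in the fiber, uniformly in $V$.

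Multiplying the (constant) fiber size $p^{rk}$ by the number $\gb{d-r}{k}$ of subspaces in the image of $\Phi$ gives the asserted formula $s(d,r;k)=p^{rk}\gb{d-r}{k}$.

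The one point requiring care---the main obstacle---is the bijection between fiber elements and sections. One must verify both that distinct $U$'s give distinct sections and that every section's image meets $W_0$ trivially. The latter is immediate, since $\sigma(v)\in W_0$ forces $v=\pi\sigma(v)=0$; and injectivity holds because the section attached to $U$ is forced to equal $(\pi|_U)^{-1}$. As a sanity check or alternative route, one could instead count ordered $k$-tuples $(u_1,\dots,u_k)$ in $W$ whose images under $\pi$ are linearly independent: successively choosing $u_{j+1}$ with $\pi(u_{j+1})$ outside the $j$-dimensional span already built gives $\prod_{j=0}^{k-1}(p^d-p^{r+j})$ such tuples, and dividing by the number $\prod_{j=0}^{k-1}(p^k-p^j)$ of ordered bases of a fixed $k$-dimensional space reproduces $p^{rk}\gb{d-r}{k}$ after factoring $p^{rk}$ out of the numerator.
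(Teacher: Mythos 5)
Your proof is correct, and at its core it performs the same count as the paper: $\gb{d-r}{k}$ choices of a $k$-dimensional subspace in a $(d-r)$-dimensional space, times $p^{rk}$ lifts of each. The difference lies in the organization. The paper fixes a complement $V$ with $V \oplus W_0 = W$, takes a basis $(z_1,\dots,z_k)$ of a subspace $U'\subseteq V$, and for each tuple $\overline{a}\in W_0^k$ forms the span $U_{\overline{a}}$ of $(z_1+a_1,\dots,z_k+a_k)$, checking that these spans are $k$-dimensional, meet $W_0$ trivially, and are pairwise distinct as $\overline{a}$ varies. Your version replaces the chosen complement by the intrinsic quotient $\pi\colon W\to W/W_0$ and parametrizes the fiber of $U\mapsto \pi(U)$ over a fixed $V$ by linear sections of $\pi$; the paper's tuples $\overline{a}$ are exactly your sections written in coordinates (the section $z_i\mapsto z_i+a_i$). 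What your formulation buys is completeness: by exhibiting a surjection $\Phi$ all of whose fibers have size exactly $p^{rk}$, you prove the stated equality in both directions, whereas the paper's argument as written only explicitly constructs $p^{rk}\gb{d-r}{k}$ distinct subspaces --- that is, the lower bound $s(d,r;k)\ge p^{rk}\gb{d-r}{k}$ --- leaving implicit that every $U$ with $U\cap W_0=(0)$ arises from a unique $U'$ and a unique $\overline{a}$. Your closing double count of tuples with linearly independent images in $W/W_0$ is a further independent verification not present in the paper, and it confirms the same formula by pure arithmetic with the factorization $p^d-p^{r+j}=p^r(p^{d-r}-p^j)$.
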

 
\begin{proof} Let $V$ be a complementary subspace to $W_0$, so that 
$V \oplus W_0 = W$.   For $k \le d-r$, let $U$ be a $k$-dimensional 
subspace of $V$, with basis $(z_1, \ldots, z_k)$.  For each choice 
$\overline{a} = (a_1, \ldots, a_k)$ of elements of $W_0$,  the subspace 
$U_{\overline{a}}$ of $W$ generated by  $(z_1+ a_1, \ldots, z_k + a_k)$ 
is $k$-dimensional and has trivial intersection with $W_0$.  For  suppose 
\[ c_1(z_1 + a_1) + \ldots c_k(z_k + a_k) = a \]
in $W_0$ for some $c_1, \ldots, c_k$ in $\Fp$.  Then, since $V \oplus W_0$ 
is a direct sum of $\Fp$-vector spaces, 
\[ c_1z_1  + \ldots c_kz_k  = 0.\]
Since $z_1, \ldots z_k$ are linearly independent,  $c_1, \ldots c_k = 0$, 
hence $a = 0$.  The same argument with $a = 0$ shows that $(z_1+ a_1, \ldots, 
z_k + a_k)$  is a linearly independent set.

Finally, each choice of elements $\overline{a} =(a_1, \ldots, a_k)$ of $W_0$ 
gives a different subspace $U_{\overline{a}}$ of $A$.  For suppose $z_i + b_i$ 
is in the space $U_{\overline{a}}$.  Then
\[ z_i + b_i = c_1(z_1 + a_1) + \ldots  + c_i(z_i + a_i) + \ldots + c_k(z_k + a_k) .\]
So 
\[ 0 = c_1(z_1 + a_1) + \ldots + ((c_i -1)z_i + c_ia_i - b_i) + \ldots + c_k(z_k + a_k). \]
But then
\[ 0 = c_1z_1 + \ldots + (c_i -1)z_i + \ldots + c_kz_k .  \]
So $c_i = 1$, all other $c_j = 0$,  and the equation reduces to
\[ a_i -b_i = 0.\]
Thus for each $k$-dimensional subspace $U$ of $W$, we obtain $p^{rk}$ $k$-dimensional subspaces $U_{\overline{a}}$ of $W$ with $W \cap W_0= (0)$. 
\end{proof}

\begin{corollary}\label{2.2}  Let $W$ be a $t$-dimensional space and $W_0\subset W$
a subspace of codimension 1. Then the number of subspaces
of $W$ not contained in $W_0$ is at least $p^{\delta(t)}$.
  \end{corollary}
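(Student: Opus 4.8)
The plan is to count, dimension by dimension, the subspaces of $W$ not contained in $W_0$, collapse each count using one of the recursion formulas, and then bound the resulting sum from below by a single well-chosen term. The starting observation is that a $k$-dimensional subspace $U$ either lies in $W_0$ or does not, and those lying in $W_0$ are exactly the $k$-dimensional subspaces of the $(t-1)$-dimensional space $W_0$. Hence the number of $k$-dimensional subspaces of $W$ not contained in $W_0$ is $\gb{t}{k} - \gb{t-1}{k}$.

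The key simplification comes from the second recursion formula $\gb{t}{k} = \gb{t-1}{k} + p^{t-k}\gb{t-1}{k-1}$, which makes this difference collapse to $p^{t-k}\gb{t-1}{k-1}$. Summing over $k=1,\dots,t$ and reindexing by $j=k-1$, the total number of subspaces of $W$ not contained in $W_0$ equals $\sum_{j=0}^{t-1} p^{t-1-j}\gb{t-1}{j}$. (The term $k=0$ contributes nothing, as expected, since the zero subspace lies in $W_0$.) At this point the corollary reduces to showing that this sum is at least $p^{\delta(t)}$.

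To finish I would discard all but one term. Using the crude bound $\gb{t-1}{j}\ge p^{j(t-1-j)}$ recorded in Section 1, the $j$-th summand is at least $p^{(t-1-j)(j+1)}$. The exponent $(t-1-j)(j+1)$ is a downward-opening parabola in $j$ with maximum near $j=(t-2)/2$, so I would take $j=t/2-1$ when $t$ is even and $j=(t-1)/2$ when $t$ is odd. A direct computation then gives exponent $t^2/4=\delta(t)$ in the even case and $(t^2-1)/4=\delta(t)$ in the odd case. Since every summand is nonnegative, the full sum is bounded below by this one term, namely $p^{\delta(t)}$, which is the claim.

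The only genuinely delicate point is verifying that the chosen index is legal, i.e.\ $0\le j\le t-1$, and that it produces the value $\delta(t)$ exactly on the nose; this is a short parity check and constitutes the (very mild) main obstacle, resolved by treating $t$ even and $t$ odd separately. Everything else is immediate from the recursion and the inequality $\gb{t-1}{j}\ge p^{j(t-1-j)}$.
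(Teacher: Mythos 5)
Your proof is correct, but it reaches the key sum by a different route than the paper. The paper first proves Proposition \ref{2.1} (counting $k$-dimensional subspaces meeting a fixed $r$-dimensional subspace trivially) and then invokes a duality argument: subspaces of dimension $k$ not contained in a hyperplane correspond to subspaces of dimension $t-k$ meeting a fixed line trivially, which yields the count $\sum_{k=0}^{t-1} p^k \gb{t-1}{k}$. You instead count directly by inclusion--exclusion, writing the number of $k$-dimensional subspaces not contained in $W_0$ as $\gb{t}{k} - \gb{t-1}{k}$, and collapse this difference with the recursion $\gb{t}{k} = \gb{t-1}{k} + p^{t-k}\gb{t-1}{k-1}$ to get $\sum_{j=0}^{t-1} p^{t-1-j}\gb{t-1}{j}$ --- which is in fact the same sum as the paper's after the substitution $j \mapsto t-1-j$ and the symmetry $\gb{t-1}{t-1-j} = \gb{t-1}{j}$. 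From there both arguments finish identically: bound each Gaussian coefficient below by $p^{j(t-1-j)}$ and keep the single term whose exponent $k(t-k)$ is maximal, which equals $\delta(t)$ by the parity check you describe. What your route buys is self-containedness: it needs only the two facts recorded in Section 1 (the recursion and the crude bound $\gb{n}{k} \ge p^{k(n-k)}$), making Proposition \ref{2.1} and the duality step unnecessary for this corollary. What the paper's route buys is that Proposition \ref{2.1} gives the exact fiber structure (the $p^{rk}$ count for general codimension $r$), which is conceptually aligned with the fiber-counting strategy used later in Proposition \ref{2.4}, whereas your subtraction argument produces only the aggregate count.
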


\begin{proof}  First we remark that via a duality argument, the
number of subspaces of dimension $k$ 
not contained in a fixed subspace of codimension 1 is
the same as the number of subspaces of dimension
$t-k$ intersecting a fixed subspace of dimension 1
trivially. Hence the preceding proposition is applicable; summing over all
possible dimensions of $U$, we find that the number of subspaces $U\subset W$
not contained in $W_0$ is
\beal  s(t, 1) &{:=} \sum_{k = 0}^{t-1} s(t, 1; k)\\
&= \sum_{k = 0}^{t-1} p^k \gb{t-1}{k}\\
& \ge \sum_{k = 0}^{t-1} p^k p^{k(t-1-k)}\\
& = \sum_{k = 0}^{t-1}  p^{k(t-k)}\\
&\ge p^{\lfloor \frac {t^2}4 \rfloor} = p^{\delta(t)}. \eeal
\end{proof}

Recall that $G$ is the map  from subspaces of $A$ to ideals of $A$ defined by
\[ G(V) = V + AV. \]
To simplify notation, we write $G(S)$ instead of $G(\langle S \rangle_{\Fp})$
for any subset $S$ of $A$.    To get a sense of the relationship between the number 
of subspaces of $A$ and the number of ideals of $A$, we  will count the number of 
elements in the fibers of $G$.

Assume $e>0$ is minimal with $A^{e+1} = 0$. (The zero algebra $A=0$ can be
safely excluded from our study.) Consider the chain 
\[ N_1 \subset N_2 \subset \ldots \subset N_e = A \]
of annihilator ideals defined by
\[ N_k := \Ann_k(A) = \{a \in A| x_1x_2 \cdots x_ka = 0 \text{ for all } x_1, 
 \dots x_k \text{ in } A\}.\]
Let $\dim_{\Fp}(N_k) = d_k$.  Then the sequence $(d_k)_k$ is obviously
increasing, and a little argument shows that $0 < d_1 < d_2 < \ldots < d_e = n$.

The strategy for bounding the number of ideals of $A$ begins with the following idea.  
Let $\mathcal{J}_t$ be the set of ideals $J$ of $A$ contained in $N_t$ but not contained in $N_{t-1}$.  
Then, since $N_t$ is an ideal of $A$ for all $t$, we have
\[ \sum_{J \in \mathcal{J}_t} |G^{-1}(J)| = s(N_t) - s(N_{t-1}) .\]
The  next lemma will help us find a lower bound on $|G^{-1}(J)|$.  
\begin{lemma} \label{3.1} Let $W = G(\{x\}) =\Fp x + Ax$ , $W_0 = Ax$ as above.  
Let $U$ be a subspace  of $W$, not contained in $W_0$.  Then $G(U) = W$. 
\end{lemma}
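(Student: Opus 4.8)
The plan is to establish the two inclusions $G(U)\subseteq W$ and $W\subseteq G(U)$ separately. The first is immediate: since $W=\Fp x+Ax$ is an ideal of $A$ and $U\subseteq W$, the ideal $G(U)=U+AU$ generated by $U$ is contained in $W$. The entire content therefore lies in the reverse inclusion $W\subseteq G(U)$. Because $W$ is precisely the ideal generated by the single element $x$ (that is the meaning of $W=G(\{x\})$), it suffices to prove that $x\in G(U)$; then $G(U)$, being an ideal containing $x$, automatically contains all of $W$.

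To produce $x$ inside $G(U)$ I would first exploit the hypothesis $U\not\subseteq W_0$. Note that $x\notin W_0=Ax$, for otherwise $W=\Fp x+Ax=Ax=W_0$ would force $U\subseteq W_0$; hence the decomposition $W=\Fp x\oplus Ax$ is direct. Choose $u\in U$ with $u\notin W_0$ and write $u=cx+ax$ with $c\in\Fp$ and $a\in A$. The coefficient $c$ must be nonzero, since $c=0$ would place $u$ in $W_0$. Rescaling $u$ by $c^{-1}$ (which keeps it inside the subspace $U$), I may assume $u=x+bx=(1+b)x$ for some $b\in A$.

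The key step is to ``invert $1+b$'' using the nilpotence of $A$. Since $A$ is nilpotent, $b^{m+1}=0$ for some $m$, and in the unitalization $\tilde A=\Fp\oplus A$ the truncated geometric series $s=\sum_{i=0}^{m}(-1)^i b^i$ satisfies $s(1+b)=1-(-1)^{m+1}b^{m+1}=1$. Writing $s=1+b'$ with $b'=\sum_{i=1}^{m}(-1)^i b^i\in A$, all the relevant products land in $A$ because $x\in A$ and $A$ is an ideal of $\tilde A$, so I may compute inside $A$:
\[
 x = s(1+b)x = s\,u = u + b'u.
\]
Here $u\in U\subseteq G(U)$, while $b'\in A$ and $u\in G(U)$ give $b'u\in G(U)$ because $G(U)$ is an ideal; since $G(U)$ is closed under addition, it follows that $x\in G(U)$.

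Combining the two inclusions yields $G(U)=W$. I expect the inversion trick of the third paragraph---recovering $x$ exactly from the perturbed element $(1+b)x$ by passing to the unitalization and using that every element of the nilpotent algebra $A$ is nilpotent---to be the crux of the argument; the surrounding bookkeeping about the direct sum $W=\Fp x\oplus Ax$ and the closure properties of the ideal $G(U)$ is routine.
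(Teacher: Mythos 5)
Your proposal is correct and uses essentially the same argument as the paper: both reduce to an element $u=(1\pm b)x$ of $U$ outside $W_0$ and recover $x$ by the truncated geometric series, the paper writing this directly as $u + bu + b^2u + \ldots + b^{e-1}u = x$ (using $A^{e+1}=0$) where you phrase it as inverting $1+b$ in the unitalization. The extra bookkeeping you include (the inclusion $G(U)\subseteq W$ and the directness of $\Fp x \oplus Ax$) is implicit in the paper's shorter write-up.
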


\begin{proof}
Let $y$ be in $U$, $y$ not in $W_0$.  Then $G(\{y\}) \subseteq G(U)$.  
After multiplying $y$ by a non-zero element of $\Fp$,  we can assume that 
$y = x - ax$ for some $a$ in $A$.  Then
\[ y + ay + a^2y + \ldots + a^{e-1}y = x\]
is in $G(\{y\})$.  So
\[ W = G(\{x\}) \subseteq G(\{y\}) \subseteq G(U) \subseteq W .\]
\end{proof}

Let $J$ be an ideal  of $A$ of  $\Fp$-dimension $d$, let $s(J)$ (or $s(d)$) 
be the number of subspaces of $J$, and let $i(J)$  be the number of ideals 
of $A$ that are contained in $J$.  Lemma \ref{3.1}  enables us to prove 
a result relating the number of subspaces and the number of ideals contained in 
the annihilator ideal $N_t$ in $A$ for each $t$.

\begin{proposition}\label{2.4} 
For each $t$ with $1 \le t \le e$, consider the ideal map $G$ restricted 
to the set of subspaces $V$ of $N_t$ that are not contained in $N_{t-1}$.  
For each $x$ in $N_t \setminus N_{t-1}$, let $q(x) = \dim(G({x}))$, and 
let $q_t = \min_{x \in N_t \setminus N_{t-1}}q(x)$.  Then for all ideals $J$ in $\mathcal{J}_t$, 
\[ |G^{-1}(J)| \ge p^{\delta(q_t)}.\]
  Hence
\[ p^{\delta(q_t)}\bigl(i(N_{t}) - i(N_{t-1})\bigr) \le s(N_{t}) - s(N_{t-1}).\]
\end{proposition}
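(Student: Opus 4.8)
The plan is to prove the fiber bound $|G^{-1}(J)|\ge p^{\delta(q_t)}$ for a single fixed ideal $J\in\mathcal J_t$, and then to obtain the displayed inequality by summing over $\mathcal J_t$ and invoking the identity $\sum_{J\in\mathcal J_t}|G^{-1}(J)|=s(N_t)-s(N_{t-1})$ recorded above together with $|\mathcal J_t|=i(N_t)-i(N_{t-1})$.

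To bound a single fiber, fix $J\in\mathcal J_t$ and choose an element $x\in J\setminus N_{t-1}$, which exists because $J\not\subseteq N_{t-1}$. Put $W=G(\{x\})=\Fp x+Ax$ and $W_0=Ax$, so $\dim W=q(x)$. Note $x\ne 0$ (it avoids $N_{t-1}$, which contains $0$), and since $A$ is nilpotent the relation $x\in Ax$ would force $x=ax=a^2x=\cdots=0$; hence $x\notin Ax$ and $W_0$ has codimension $1$ in $W$. Corollary \ref{2.2} then produces at least $p^{\delta(q(x))}$ subspaces $U\subseteq W$ with $U\not\subseteq W_0$, and Lemma \ref{3.1} guarantees $G(U)=W$ for each of them.

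These subspaces lie in the fiber over $W$, not over $J$, so the main step is to transfer them into $G^{-1}(J)$. Here I will use that $G$ is additive on sums of subspaces, $G(U_1+U_2)=G(U_1)+G(U_2)$, which is immediate from $G(V)=V+AV$. Choose a vector-space complement $C$ of $W$ in $J$, so that $J=W\oplus C$ and therefore $W+G(C)=J$ (as $G(C)\supseteq C$). For each $U$ above set $V=U+C$; then $G(V)=G(U)+G(C)=W+G(C)=J$, so $V\in G^{-1}(J)$. Because $C\cap W=(0)$ and $U\subseteq W$, the modular law gives $(U+C)\cap W=U$, so the assignment $U\mapsto U+C$ is injective on these subspaces. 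Since $q(x)\ge q_t$ and $\delta$ is nondecreasing, this yields $|G^{-1}(J)|\ge p^{\delta(q(x))}\ge p^{\delta(q_t)}$.

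Summing the fiber bound over all $J\in\mathcal J_t$ then gives $s(N_t)-s(N_{t-1})=\sum_{J\in\mathcal J_t}|G^{-1}(J)|\ge p^{\delta(q_t)}\bigl(i(N_t)-i(N_{t-1})\bigr)$, which is the assertion. I expect the transfer from the fiber over $W$ to the fiber over $J$ to be the only genuinely nonobvious step: Lemma \ref{3.1} and Corollary \ref{2.2} bound $|G^{-1}(W)|$ directly, and the content is that the additivity of $G$, combined with a complement meeting $W$ trivially, lets one inflate these subspaces to generators of the larger ideal $J$ without collapsing the count.
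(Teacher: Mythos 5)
Your proposal is correct and follows essentially the same route as the paper's proof: pick $x\in J\setminus N_{t-1}$, form $W=G(\{x\})$ and $W_0=Ax$, use Lemma \ref{3.1} and Corollary \ref{2.2} to produce at least $p^{\delta(q_t)}$ subspaces of $W$ generating $W$, and inflate them by a complement of $W$ in $J$ to land injectively in $G^{-1}(J)$, then sum over $\mathcal J_t$. The only difference is that you spell out details the paper leaves implicit (that $x\notin Ax$ by nilpotency, the additivity of $G$, and injectivity via the modular law), which is fine.
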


\begin{proof} 
Let $J$ be an ideal contained in $N_t$, not contained in $N_{t-1}$.  
Let $x$ be in $J$, $x$ not in $N_{t-1}$.  Let $W_0 = Ax$ and $W = G(\{x\}) 
 = \FF_px + Ax$.  Then $W$ has dimension at least $q_t$, and $W_0$ has 
codimension 1 in $W$.  Let $Y$ be a complement of $W$ in  $J$.  Then for 
every subspace $U$ of $W$ not contained in $W_0$, we have $G(U)=W$ 
and thus $G(U+Y)=J$. 

Whenever $U$ and $U'$ are distinct subspaces of $W$ not contained in $W_0$, 
we have $U + Y \ne U' + Y$.  So the number of preimages of $J = G(\{x\} + Y)$ 
is at least equal to the number of subspaces of $W$ that are not contained 
in $W_0$.  Since $\dim(W) \ge q_t$, that number of subspaces is 
$\ge p^{\delta(q_t)}$ by Corollary \ref{2.2}.  
\end{proof}

Dividing both sides of the $t$-th inequality of Proposition \ref{2.4} 
by $p^{\delta(q_t)}$ and summing them over all $t$ yields an upper bound 
for the number of ideals of $A$:

\begin{corollary}\label{3.2}
\[ i(A) \le \sum_{t=1}^{e-1} (p^{-\delta(q_t)}-p^{\delta(q_{t+1})})s(N_t) 
 + p^{-\delta(q_e)}s(N_e). \]
 \end{corollary}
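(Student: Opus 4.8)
The plan is to run a telescoping-plus-summation-by-parts argument on the family of inequalities supplied by Proposition \ref{2.4}. First I would extend the annihilator chain by setting $N_0 = (0)$, so that $s(N_0) = i(N_0) = 1$, and I would divide the $t$-th inequality of Proposition \ref{2.4} by $p^{\delta(q_t)}$ to obtain, for each $1 \le t \le e$,
\[ i(N_t) - i(N_{t-1}) \le p^{-\delta(q_t)}\bigl(s(N_t) - s(N_{t-1})\bigr). \]

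Next I would sum these over $t = 1, \ldots, e$. The left-hand side telescopes to $i(N_e) - i(N_0) = i(A) - 1$. On the right-hand side I would apply summation by parts to collect the coefficient of each $s(N_t)$: writing $c_t = p^{-\delta(q_t)}$ and $b_t = s(N_t)$, one rewrites $\sum_{t=1}^{e} c_t(b_t - b_{t-1})$ as
\[ c_e b_e - c_1 b_0 + \sum_{t=1}^{e-1} (c_t - c_{t+1}) b_t, \]
so that the middle term $s(N_t)$ for $1 \le t \le e-1$ picks up the coefficient $p^{-\delta(q_t)} - p^{-\delta(q_{t+1})}$, the top term $s(N_e)$ gets coefficient $p^{-\delta(q_e)}$, and there is a single leftover boundary term $-p^{-\delta(q_1)} s(N_0)$ at the bottom.

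The step that makes everything fit is the evaluation of this bottom boundary term. For $x \in N_1 = \Ann(A)$ we have $Ax = 0$, so $G(\{x\}) = \Fp x$ is one-dimensional; hence $q_1 = 1$ and $\delta(q_1) = \delta(1) = 0$, giving $p^{-\delta(q_1)} = 1$. Combined with $s(N_0) = 1$, the boundary term equals $-1$, which cancels exactly against the $-1$ produced by the telescoping sum. After adding $1$ to both sides this yields
\[ i(A) \le \sum_{t=1}^{e-1}\bigl(p^{-\delta(q_t)} - p^{-\delta(q_{t+1})}\bigr) s(N_t) + p^{-\delta(q_e)} s(N_e), \]
as claimed.

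I expect the only genuine obstacle to be bookkeeping: one must track the two boundary contributions of the Abel summation carefully and notice that the identity $q_1 = 1$ is precisely what is needed to make the stray additive constant vanish, rather than leaving an uncontrolled slack term. (I also note that the exponent printed as $p^{\delta(q_{t+1})}$ in the statement should read $p^{-\delta(q_{t+1})}$ for the bound to agree with what this argument produces.)
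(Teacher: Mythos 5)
Your proof is correct and takes essentially the same route as the paper, whose one-sentence argument ("divide the $t$-th inequality of Proposition \ref{2.4} by $p^{\delta(q_t)}$ and sum over all $t$") is exactly your telescoping-plus-Abel-summation computation, with your boundary bookkeeping (in particular the cancellation of the constant via $q_1=1$, $s(N_0)=i(N_0)=1$) being precisely what the paper leaves implicit. You are also right that the printed exponent $p^{\delta(q_{t+1})}$ is a sign typo for $p^{-\delta(q_{t+1})}$; the literal statement would fail already for the uniserial algebra with $e=2$, and the paper's later applications (omitting the negative terms) are consistent with the corrected reading.
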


Omitting the negative terms and applying Lemma \ref{4.1} d) yields  the 
following upper bound on $i(A)$ in terms of $s(A)$  (recall 
$d_t = \dim{N_t}$):
 
\begin{corollary}\label{3.2bis} 
  \[ i(A) \le \bigl(\sum_{t=1}^{e-1} 2p^{-\delta(q_t) +\delta(d_t) - \delta(d_e)}
 + p^{-\delta(q_e)}\bigr)s(A). \]
\end{corollary}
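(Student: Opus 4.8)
The plan is to start from Corollary~\ref{3.2}, which already gives
\[
i(A) \le \sum_{t=1}^{e-1} \bigl(p^{-\delta(q_t)} - p^{\delta(q_{t+1})}\bigr)s(N_t) + p^{-\delta(q_e)}s(N_e),
\]
and massage it into a bound involving only $s(A) = s(N_e)$. First I would discard the subtracted terms: each summand of the form $-p^{\delta(q_{t+1})}s(N_t)$ is nonpositive (since $\delta \ge 0$ and $s(N_t) \ge 0$), so dropping them only increases the right-hand side. This replaces the coefficient $p^{-\delta(q_t)} - p^{\delta(q_{t+1})}$ by $p^{-\delta(q_t)}$ in the sum for $t = 1, \dots, e-1$, leaving the final term $p^{-\delta(q_e)}s(N_e)$ untouched. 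At this point the estimate reads
\[
i(A) \le \sum_{t=1}^{e-1} p^{-\delta(q_t)} s(N_t) + p^{-\delta(q_e)} s(A).
\]

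Next I would convert each $s(N_t)$ into a multiple of $s(A)$. Since $N_t \subseteq N_e = A$ with $\dim N_t = d_t \le d_e = n$, Lemma~\ref{4.1}~d) (applied in the direction $s(N_e) \ge \tfrac12 p^{\delta(d_e) - \delta(d_t)} s(N_t)$, i.e.\ with $m = d_t$ and $n = d_e$) yields
\[
s(N_t) \le 2\, p^{\delta(d_t) - \delta(d_e)} s(A).
\]
The factor $2$ is exactly the one Lemma~\ref{4.1}~d) permits in the mixed-parity case; keeping it uniformly across all $t$ is harmless and matches the constant appearing in the claimed bound. Substituting this into the $t$-th summand gives $p^{-\delta(q_t)} s(N_t) \le 2\, p^{-\delta(q_t) + \delta(d_t) - \delta(d_e)} s(A)$, and summing over $t = 1, \dots, e-1$ produces the first group of terms in the statement, while the final term $p^{-\delta(q_e)} s(A)$ is already in the desired form (note $d_e = n$, so $\delta(d_e) = \delta(n)$ and the last term needs no rescaling). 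Factoring out $s(A)$ then gives exactly
\[
i(A) \le \Bigl(\sum_{t=1}^{e-1} 2\,p^{-\delta(q_t) + \delta(d_t) - \delta(d_e)} + p^{-\delta(q_e)}\Bigr) s(A).
\]

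I expect the whole argument to be essentially bookkeeping: both nontrivial inputs (the fiber bound feeding Corollary~\ref{3.2}, and the comparison of $s(N_t)$ with $s(A)$) are already established. The only place demanding a moment of care is the direction in which Lemma~\ref{4.1}~d) is invoked. The lemma is stated as a lower bound on $s(n)$ in terms of $s(m)$ for $n \ge m$; here I need the contrapositive reading as an upper bound on the \emph{smaller} space $s(N_t)$ in terms of the \emph{larger} $s(A)$, which is the same inequality read from right to left. I would also double-check that the factor $1/2$ in the lemma becomes the factor $2$ after this inversion, and confirm that applying the uniform constant $2$ to every term (even those $t$ for which Lemma~\ref{4.1}~d) would allow omitting it) is legitimate since it only weakens the bound. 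No genuine obstacle arises; the step that most deserves an explicit sentence is simply the sign/direction check on the negative terms and on the lemma's inequality.
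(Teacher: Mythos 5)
Your proof is correct and follows essentially the same route as the paper: starting from Corollary~\ref{3.2}, discarding the (nonpositive) subtracted terms, and then applying Lemma~\ref{4.1}~d) with $m = d_t$, $n = d_e$ to bound $s(N_t) \le 2\,p^{\delta(d_t)-\delta(d_e)}s(A)$, which yields the stated coefficient sum after factoring out $s(A)$. Your direction check on the lemma and the observation that keeping the uniform factor $2$ only weakens the bound are exactly the (implicit) content of the paper's one-line proof.
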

 
To make it easier to apply this inequality for general $A$, we show
the following simple lower bound on the quantity $q_t$. (Recall it was defined by
$q_t = \min_{x \in N_t \setminus N_{t-1}}q(x)$ with $q(x) = \dim(G({x}))$.)

\begin{proposition}  
 For all $t>0$ we have $q_t \ge t$.  
\end{proposition}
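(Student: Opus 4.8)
The plan is to fix an arbitrary $x \in N_t \setminus N_{t-1}$ (note $x \ne 0$) and exhibit a strictly descending chain of $t+1$ subspaces inside $G(\{x\}) = \Fp x + Ax$. This forces $q(x) = \dim(\Fp x + Ax) \ge t$, and since $x$ is arbitrary it gives $q_t = \min_x q(x) \ge t$.

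First I would translate the membership conditions on $x$ into statements about the powers $A^j x$. Since $x \in N_t$, every product $x_1 \cdots x_t x$ vanishes, so $A^t x = 0$; since $x \notin N_{t-1}$, some product $y_1 \cdots y_{t-1} x$ is nonzero, so $A^{t-1} x \ne 0$. Because $A^{j+1}$ is contained in the ideal $A^j$, we have $A^{j+1}x \subseteq A^j x$, which produces the descending chain
\[ \Fp x + Ax \supseteq Ax \supseteq A^2 x \supseteq \cdots \supseteq A^{t-1}x \supseteq A^t x = 0. \]

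The heart of the argument, and the step I expect to be the main obstacle, is to verify that every one of these inclusions is \emph{strict}. For the interior steps I would argue by contradiction: if $A^j x = A^{j+1}x$ for some $1 \le j \le t-1$, then applying $A$ repeatedly shows $A^j x = A^m x$ for all $m \ge j$, and taking $m \ge t$ forces $A^j x = 0$; but $A^j x \supseteq A^{t-1}x \ne 0$, a contradiction. For the top step $\Fp x + Ax \supsetneq Ax$ I would instead invoke the nilpotency of $A$: if $x \in Ax$, say $x = ax$, then $x = a^m x$ for all $m$, and $a^m = 0$ for $m$ large, giving $x = 0$, contrary to $x \in N_t \setminus N_{t-1}$.

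With all $t$ inclusions strict, the chain
\[ \Fp x + Ax \supsetneq Ax \supsetneq \cdots \supsetneq A^{t-1}x \supsetneq 0 \]
consists of $t+1$ distinct subspaces, so each strict inclusion drops the dimension by at least one and $\dim(\Fp x + Ax) \ge t$. Taking the minimum over $x \in N_t \setminus N_{t-1}$ yields $q_t \ge t$. The edge case $t=1$ is consistent with this reasoning: there $x$ is a nonzero element of $N_1 = \Ann_1(A)$, so $Ax = 0$, $G(\{x\}) = \Fp x$, and $q(x) = 1$.
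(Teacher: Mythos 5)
Your proof is correct, and it takes a genuinely different route from the paper's. The paper argues constructively: fixing $u_1,\ldots,u_{t-1}$ with $u_1u_2\cdots u_{t-1}x \ne 0$, it forms the partial products $x_k = u_k\cdots u_{t-1}x$, checks that each $x_k$ lies in $N_k \setminus N_{k-1}$, and concludes that $x_1,\ldots,x_{t-1},x$ are linearly independent elements of $G(\{x\}) = \Fp x + Ax$ because they sit in distinct layers of the annihilator filtration; this immediately gives $\dim(G(\{x\}))\ge t$. You instead bound the dimension by the length of the chain
\[ \Fp x + Ax \supsetneq Ax \supsetneq A^2x \supsetneq \cdots \supsetneq A^{t-1}x \supsetneq A^tx = 0, \]
whose strictness you establish with two separate devices: a stabilization argument for the interior inclusions (if $A^jx = A^{j+1}x$ then $A^jx = A^tx = 0$, contradicting $A^{t-1}x \ne 0$) and nilpotency of a single element for the top inclusion ($x = ax$ would force $x = a^mx = 0$). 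Both proofs rest on the same translation of $x \in N_t\setminus N_{t-1}$ into $A^tx = 0 \ne A^{t-1}x$, and both are complete and elementary. The paper's version is shorter and produces explicit witnesses, one in each layer $N_k\setminus N_{k-1}$ --- exactly the kind of bookkeeping it later refines for binomial algebras, where multiplying by a suitable family of monomials and tracking leading terms improves the bound $q_t \ge t$ to $q_t \ge 2^{t-1}$. Your version is more module-theoretic: it never needs the observation that elements in distinct layers are linearly independent, and the chain argument applies verbatim to any cyclic module over a commutative nilpotent algebra, so it isolates the general principle underlying the bound.
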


\begin{proof}   This is clear for $t = 1$.  

 For $t > 1$ let  $x$ be in $N_t$ and not in $N_{t-1}$.  Let 
$u_1, u_2, \ldots, u_{t-1}$ in $A$ so that $u_1u_2 \cdots u_{t-1}x \ne 0$.  
Then for each $k$, $x_k = u_{k}\cdots u_{t-1}x$ is in $N_{k}$ and not 
in $N_{k-1}$.  So $x_1, \ldots, x_{t-1}, x$ are linearly independent in $A$.  
Thus $G({x}) = \Fp x + Ax$ has dimension at least $t$.  \end{proof}

In the next theorem we will use this lower bound on $q_t$
to get a general, fairly elegant upper bound on $i(A)/s(A)$ that only depends
on $e$, the length of the annihilator chain in $A$.  However, in some 
of the examples treated below it will be worthwhile to have
a closer look at $q_t$; we will find it to be considerably
larger than $t$, which will enable us to sharpen the upper bound. 

The general bound goes as follows.

\begin{theorem}\label{main} With the above hypotheses on $A$ and $e$ we have
\[   \frac{i(A)}{s(A)}  \le \frac{2e-1}{p^{\delta(e)}}. \]
\end{theorem}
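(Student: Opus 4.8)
The plan is to start from Corollary~\ref{3.2bis}, which already expresses $i(A)/s(A)$ as a sum of the shape
\[
\sum_{t=1}^{e-1} 2\,p^{-\delta(q_t)+\delta(d_t)-\delta(d_e)} + p^{-\delta(q_e)},
\]
and to bound each summand individually by $p^{-\delta(e)}$. The factor $2$ in front of the first $e-1$ terms together with the single last term will then produce the count $2(e-1)+1 = 2e-1$ in the numerator. So the crux is showing that every exponent satisfies
\[
-\delta(q_t)+\delta(d_t)-\delta(d_e) \le -\delta(e)
\quad\text{for } 1\le t\le e-1,
\qquad
-\delta(q_e) \le -\delta(e).
\]
The second of these is immediate from the Proposition giving $q_e \ge e$, since $\delta$ is monotone increasing.

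For the first family of inequalities I would rewrite the desired bound as
\[
\delta(d_t) - \delta(d_e) \le \delta(q_t) - \delta(e).
\]
Here I have two pieces of information at my disposal: the lower bound $q_t \ge t$ from the Proposition, and the strict chain $0 < d_1 < d_2 < \cdots < d_e = n$ of annihilator dimensions, which forces $d_t \le d_e - (e - t)$, i.e.\ $d_e - d_t \ge e - t$. The left-hand side is a difference $\delta(d_t)-\delta(d_e)$ of a $\delta$ evaluated at two points that are at least $e-t$ apart and cap out at $d_e$; the right-hand side is $\delta(q_t)-\delta(e)$ with $q_t \ge t$. The natural move is to use the identity $\delta(m)-\delta(m-1) = \lceil m/2\rceil$ (or the telescoping $\delta(n)-\delta(n-2)=n-1$ already invoked in the proof of Lemma~\ref{4.1}d)) to control these differences by sums of half-integers, and then compare.

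The main obstacle I anticipate is that $\delta$ is convex rather than linear, so differences of $\delta$ are \emph{not} translation-invariant: $\delta(d_t)-\delta(d_e)$ gets more negative the larger $d_e$ is, which helps, but $\delta(q_t)-\delta(e)$ also varies with $q_t$, and one must make sure the inequality survives in the worst case. I expect the cleanest route is to prove the sharper auxiliary fact that for any integers $a \le b$ and $c \le d$ with $b - a \le d - c$ and $b \le d$ one has $\delta(a)-\delta(b) \le \delta(c)-\delta(d)$; applying this with $a = d_t,\ b = d_e,\ c = t,\ d = e$ (using $d_e - d_t \ge e - t$, $d_e \ge e$, and $q_t \ge t$) should close the gap. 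Verifying this monotonicity-of-$\delta$-differences lemma is the one genuinely computational step, but it reduces to the convexity of $\delta$ and a short case check on parities, so it is routine once set up correctly.
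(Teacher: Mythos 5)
Your strategy is exactly the paper's: starting from Corollary~\ref{3.2bis}, use $q_t \ge t$ together with a monotonicity fact for differences of $\delta$ to bound each of the first $e-1$ summands by $2p^{-\delta(e)}$ and the last one by $p^{-\delta(e)}$, giving $(2e-1)p^{-\delta(e)}$. The key fact the paper invokes is the one you identify: since $0<d_1<d_2<\cdots<d_e$ forces $d_e\ge e$ and $d_e-d_t\ge e-t$, one gets $\delta(d_e)-\delta(d_t)\ge \delta(e)-\delta(t)$, after which the $p^{\pm\delta(t)}$ factors cancel.

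However, your auxiliary lemma is stated with the hypotheses reversed, and as written it is false: you assume $b-a\le d-c$ and $b\le d$ and conclude $\delta(a)-\delta(b)\le\delta(c)-\delta(d)$. Take $a=c=1$, $b=2$, $d=3$: both hypotheses hold, but the conclusion reads $0-1\le 0-2$, i.e.\ $-1\le -2$. What you actually need --- and what the facts you cite in the application ($d_e-d_t\ge e-t$ and $d_e\ge e$) in fact provide --- is the lemma with the opposite hypotheses: if $b-a\ge d-c$ and $b\ge d$, then $\delta(b)-\delta(a)\ge\delta(d)-\delta(c)$. This corrected version is true and is proved by the telescoping you suggest, with one adjustment: the increment identity is $\delta(m)-\delta(m-1)=\lfloor m/2\rfloor$, not $\lceil m/2\rceil$ (check $m=3$: $\delta(3)-\delta(2)=1$). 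Then $\delta(b)-\delta(a)$ is a sum of $b-a\ge d-c$ nonnegative increments, and since $b\ge d$ the largest $d-c$ of them dominate, term by term from the top, the increments composing $\delta(d)-\delta(c)$. With the lemma restated this way, your application with $a=d_t$, $b=d_e$, $c=t$, $d=e$ (and $q_t\ge t$ to replace $\delta(q_t)$ by $\delta(t)$) closes the argument and reproduces the paper's proof.
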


\begin{proof}
In the inequality of Corollary \ref{3.2bis}, replace $q_t$  by $t$ and 
observe that since $1<d_1<d_2<\ldots< d_e$, one has 
$\delta(d_e)-\delta(d_t)\ge \delta(e)-\delta(t)$.
If we insert this into the inequality, the terms $p^{\pm\delta(t)}$
cancel and we obtain
\[  i(A) \le \sum_{t=1}^{e-1} 2p^{-\delta(e)}s(N_e) + p^{-\delta(e)}s(N_e)
 = (2e-1)p^{-\delta(e)} s(A). \] \end{proof}

 For $e = 2, 3$  the inequalities of Theorem \ref{main} are
\beal  i(A) &\le \frac 3p s(A)   \text{  \quad     for }e = 2;\\
i(A) &\le \frac 5{p^2} s(A)   \text{  \quad     for }e = 3.
\eeal
We can improve these bounds by some constant factors, 
(almost) without imposing further conditions
on the algebra $A$.   Recall that $n = \dim(A)$.

\begin{proposition} \label{main3} For $e = 2$, we have
\[  i(A) \le \frac 2p s(A)\]
whenever $p \ge 3$ and n $\ge 3$.  For $e = 3$, we have
\[ i(A) \le \frac  2{p^2}s(A)\]
whenever $p \ge 3, n \ge 4$. \end{proposition}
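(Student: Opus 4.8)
The plan is to avoid the lossy step in which Corollary \ref{3.2bis} replaces the sharp exponents of Lemma \ref{4.1} by the blanket estimate of part (d), and instead to combine the telescoped inequalities of Proposition \ref{2.4} directly with the finer ratio bounds contained in parts (a)--(c). Summing $i(N_t)-i(N_{t-1}) \le \beta_t\bigl(s(N_t)-s(N_{t-1})\bigr)$ over $t$, where $\beta_t := p^{-\delta(q_t)}$, and using $q_1=1$ (since $Ax=0$ for $x\in N_1=\Ann(A)$, so $\beta_1=1$), produces for $e=2$ the inequality $i(A) \le (1-\beta_2)s(N_1) + \beta_2 s(A)$ and for $e=3$ the inequality $i(A) \le (1-\beta_2)s(N_1)+(\beta_2-\beta_3)s(N_2)+\beta_3 s(A)$. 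The bound $q_t\ge t$ proved above gives $\delta(q_2)\ge\delta(2)=1$ and $\delta(q_3)\ge\delta(3)=2$, hence $\beta_2\le p^{-1}$ and $\beta_3\le p^{-2}$.

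For $e=2$ the one extra ingredient I would record is the clean estimate $s(n-1)\le \tfrac{1}{p-1}s(n)$ for every $n\ge 3$: when $n$ is odd this is Lemma \ref{4.1}(c) together with $p^{(n-1)/2}\ge p\ge p-1$, and when $n\ge 4$ is even it is Lemma \ref{4.1}(b) together with $\tfrac12 p^{n/2}\ge \tfrac12 p^2\ge p-1$. Since $A^2\ne 0$ forces $N_1=\Ann(A)\subsetneq A$, we have $\dim N_1\le n-1$ and so $\gamma:=s(N_1)/s(A)\le \tfrac{1}{p-1}$. The function $(1-\beta_2)\gamma+\beta_2$ is nondecreasing in each of $\gamma$ and $\beta_2$ (the coefficients $1-\beta_2$ and $1-\gamma$ being nonnegative), so it is maximized at $\gamma=\tfrac{1}{p-1}$, $\beta_2=\tfrac1p$, where its value is $\tfrac{p-1}{p}\cdot\tfrac{1}{p-1}+\tfrac1p=\tfrac2p$. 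This yields $i(A)\le \tfrac2p\, s(A)$.

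For $e=3$ I would run the same argument with ratio bounds one power of $p$ sharper, which is exactly what forces $n\ge 4$. Now $N_1\subsetneq N_2\subsetneq A$ gives $\dim N_1\le n-2$ and $\dim N_2\le n-1$, and Lemma \ref{4.1} yields $s(N_2)/s(A)\le s(n-1)/s(n)\le 2p^{-2}$ (from parts (b),(c), valid once $n\ge4$) and $s(N_1)/s(A)\le s(n-2)/s(n)\le p^{-(n-1)}\le p^{-3}$ (from part (a)). Bounding the harmless factors by $1-\beta_2\le 1$ and $\beta_2-\beta_3\le \beta_2\le p^{-1}$ and using $\beta_3\le p^{-2}$, the telescoped inequality collapses to
\[ \frac{i(A)}{s(A)} \le p^{-3} + 2p^{-2}\cdot p^{-1} + p^{-2} = 3p^{-3}+p^{-2}, \]
and $3p^{-3}+p^{-2}\le 2p^{-2}$ is equivalent to $p\ge 3$.

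The only genuine difficulty is sharpness. Part (d) of Lemma \ref{4.1}, which suffices for Theorem \ref{main}, throws away too much; the whole point here is to retreat to parts (a)--(c) and keep the exact exponents $n-1$ and $\lfloor n/2\rfloor$. These are just barely strong enough once $n$ passes the stated threshold: for $e=3$ at $n=3$ the same computation gives only $p^{-2}+p^{-2}+p^{-2}=3p^{-2}$, which is why the hypothesis is $n\ge 4$ rather than $n\ge 3$. Everything else---the telescoping, the inequality $q_t\ge t$, and the elementary monotonicity used to optimize the two-variable expression---is routine.
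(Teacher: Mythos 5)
Your proof is correct and is essentially the paper's own argument: both start from the telescoped inequality of Corollary \ref{3.2} with $q_t$ replaced by $t$ (so $\beta_1=1$, $\beta_2\le p^{-1}$, $\beta_3\le p^{-2}$), bound $\dim N_1\le n-2$ and $\dim N_2\le n-1$, and invoke Lemma \ref{4.1}(a)--(c) to compare $s(n-1)$ and $s(n-2)$ with $s(n)$. The only difference is cosmetic: you package the parity-by-parity estimates into uniform ratio bounds (e.g.\ $s(n-1)\le \tfrac{1}{p-1}s(n)$ for $e=2$, and $s(n-1)\le 2p^{-2}s(n)$, $s(n-2)\le p^{-3}s(n)$ for $e=3$) and then optimize, whereas the paper substitutes the exact exponents and checks the resulting numerical inequality separately for $n$ even and $n$ odd.
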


\begin{proof}  Case $e = 2$:   From Corollary \ref{3.2} with 
$\delta(q_t)$ replaced by $\delta(t)$, we have
\[ i(A) \le  \bigl(1 - \frac 1p\bigr)s(N_1) +\frac 1p s(A). \]
To get the claimed inequality it suffices to assume that $\dim{N_1} = n-1$ 
and show that
\[ \bigl(1 - \frac 1p\bigr)s(n-1) \le \frac 1p s(n),\]
or $(p-1)s(n-1) \le s(n)$.  Using Lemma \ref{4.1}b) for $n$ even it suffices 
to show that
\[ \frac {p^{n/2}}2 > p-1,\]
which holds for $p \ge 3, n\ge 4$, while for $n$ odd, it suffices 
by Lemma \ref{4.1}c) to show that 
\[ p^{n/2} > p-1,\]
which holds for $p \ge 3, n \ge 3$.  

Case $e = 3$.  From Corollary \ref{3.2} we have
\[ i(A) \le \bigl(1 - \frac 1p\bigr)s(N_1) + 
   \bigl(\frac 1p - \frac 1{p^2}\bigr)s(N_2) +  
	  \frac 1{p^2}s(A).\]
Since $s(A) = s(n)$, the right side is maximized when 
$s(A) = s(n), s(N_2) = s(n-1), s(N_1) = s(n-2)$.  
To show that the right side is $\le \frac 2{p^2}s(A)$, it suffices 
to show that
\[ \bigl(1 - \frac 1p\bigr)s(n-2) + \bigl(\frac 1p - \frac 1{p^2}\bigr)s(n-1) 
 \le \frac 1{p^2} s(n).\]
Using Lemma \ref{4.1}b) for $n$ even, we are reduced to showing that
\[ \bigl(\frac 1p - \frac 1{p^2}\bigr) \frac 2{p^{n/2}} +  
   \bigl(1 - \frac 1p\bigr)\frac 1{p^{n-1}} 
	 \le \frac 1{p^2},\]
which holds for $p \ge 3, n \ge 4$.  Using Lemma \ref{4.1}c) for $n$ odd, 
we see it suffices to show that 
\[ \bigl(1 - \frac 1p\bigr)\frac 1{p^{n-1}}  + 
 \bigl(\frac 1p - \frac 1{p^2}\bigr) \frac 1{p^{(n-1)/2}}  
   \le \frac 1{p^2},\]
which holds for $n \ge 5$ and $p \ge 2$.  \end{proof}

 The bounds of Theorem \ref{main} and Proposition \ref{main3} imply: 
 
\begin{corollary}\label{2.10} Suppose $K/k$ is a Galois extension with elementary abelian $p$-group $G$ and is also a $H$-Hopf Galois extension  where $H$ arises from a commutative nilpotent $\Fp$-algebra structure $A$ on the additive group $G$,  where $A^e \ne 0, A^{e+1} = 0$ and $e < p$.  Then  $i(A)/s(A)$ is the proportion of intermediate fields that are in the image of the Galois correspondence from sub-Hopf algebras of $H$, and $i(A)/s(A) < 0.01$ for
 
 \tb $e =2, p \ge 200$,
 
 \tb $e = 3, p \ge 17$,
 
 \tb $e = 4, p \ge 7$,
 
 \tb all $e, p$ with $5 \le e < p$.
 \end{corollary}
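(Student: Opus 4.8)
The corollary has two components, and only the second requires work. The assertion that $i(A)/s(A)$ is the proportion of intermediate fields in the image of the Galois correspondence is precisely the observation set up in the Introduction: subHopf algebras of $H$ correspond to ideals of $A$, intermediate fields correspond to $\Fp$-subspaces of $A$ by the classical FTGT, and the Galois correspondence identifies the former with the image. So I would simply recall that identification and then devote the proof to the four numerical thresholds. The plan for the latter is to handle the four ranges of $e$ one at a time, in each case invoking whichever of Theorem \ref{main} and Proposition \ref{main3} is sharper and checking when the resulting upper bound on $i(A)/s(A)$ drops below $1/100$. Because Proposition \ref{main3} replaces the factor $2e-1$ by $2$ when $e = 2, 3$, I would use it for those two values and fall back on Theorem \ref{main} for $e \ge 4$.

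For the small cases I would substitute $\delta(2) = 1$, $\delta(3) = 2$, $\delta(4) = 4$ and read off the thresholds. For $e = 2$, Proposition \ref{main3} gives $i(A)/s(A) \le 2/p$, and $2/p < 1/100$ exactly when $p > 200$; since there is no prime between $200$ and $210$, this coincides with the listed condition $p \ge 200$. For $e = 3$, Proposition \ref{main3} gives $i(A)/s(A) \le 2/p^2$, and $2/p^2 < 1/100$ when $p^2 > 200$, i.e.\ $p \ge 17$ (the value $13$ fails). For $e = 4$, Theorem \ref{main} gives $i(A)/s(A) \le 7/p^4$, and $7/p^4 < 1/100$ when $p^4 > 700$, i.e.\ $p \ge 7$ (the value $5$ fails). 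Here I must record the hypotheses attached to Proposition \ref{main3}, namely $n \ge 3$ for $e = 2$ and $n \ge 4$ for $e = 3$; the finitely many lower-dimensional algebras with these values of $e$ are covered instead by the weaker Theorem \ref{main}, which only forces a slightly larger $p$, and this should be mentioned.

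For the uniform range $e \ge 5$ I would use Theorem \ref{main}, reducing the claim to $(2e-1)/p^{\delta(e)} < 1/100$ for all primes $p > e$. Since $p > e$ forces $p \ge e+1$, and $\delta(e) = \lfloor e^2/4\rfloor \ge e$ for $e \ge 5$, we get $p^{\delta(e)} \ge (e+1)^e$, so it is enough to prove $(e+1)^e > 100(2e-1)$ for every $e \ge 5$. This is the single step that is not a one-off numerical check: I would verify it at $e = 5$, where $6^5 = 7776 > 900$, and then argue that the inequality only strengthens as $e$ grows, since the left side increases at least exponentially in $e$ while the right side is linear.

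The argument is in essence an assembly of Theorem \ref{main} and Proposition \ref{main3} with elementary arithmetic, so there is no deep obstacle. The two points needing care are the uniform bound over the infinite family $e \ge 5$, where a monotonicity or growth estimate replaces a finite computation, and the bookkeeping of the dimension hypotheses in Proposition \ref{main3} for the smallest algebras. I expect the former to be the main, though modest, difficulty.
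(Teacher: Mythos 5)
Your proposal is correct and is essentially the paper's own argument: the paper offers no written proof of Corollary \ref{2.10}, stating it as an immediate consequence of Theorem \ref{main} and Proposition \ref{main3}, and your assembly — Proposition \ref{main3} for $e=2,3$, Theorem \ref{main} for $e=4$, and the uniform estimate $p^{\delta(e)}\ge (e+1)^e > 100(2e-1)$ for $e\ge 5$ — is exactly the intended computation, with all the arithmetic thresholds checked correctly.

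One remark: your insistence on tracking the dimension hypotheses of Proposition \ref{main3} is not mere bookkeeping, and your fallback (``Theorem \ref{main} only forces a slightly larger $p$'') cannot be dispensed with, because it exposes a genuine edge case that the paper silently ignores. For $e=2$, $n=2$ the algebra is the uniserial one, $A=\Fp x+\Fp x^2$ with $x^3=0$, so $i(A)=3$ and $s(A)=p+3$, giving $i(A)/s(A)=3/(p+3)$, which exceeds $0.01$ for all primes $211\le p\le 293$; thus the first bullet of the corollary is literally false unless one assumes $n\ge 3$ (or raises the threshold to $p\ge 307$). Your proof, which proves the statement for $n\ge 3$ when $e=2$ and flags the remaining cases as needing a caveat, is therefore the correct resolution; the paper's unqualified statement is the one with the gap.
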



\section{A lower bound on the number of ideals}

We now obtain a lower bound  on the number of ideals of $A$, 
by exhibiting a collection of ideals in $A$ and estimating its size.    
Recall that $A^e\not=0 = A^{e+1}$ and that we defined
\[ N_r = \Ann_r(A) = \{a \in A | x_1x_2 \cdots x_ra = 0 \text{  for all } x_1,
 \ldots, x_r \text{ in } A\}.\]
Then $N_r$ is an ideal of $A$, and 
\[ (0) \subset N_1 \subset N_2 \subset \ldots \subset N_e = A,\]
all inclusions being proper.

We already defined $d_r = \dim(N_r)$;  let us put  
$t_r = \dim_{\Fp}(N_{r}/N_{r-1})$.  
For each $r=1,\ldots,e$, let $W_r$ be a subspace of $A$ so that 
\[ N_{r} = W_r \oplus N_{r-1}.\]
(In particular, $W_1= N_1$.)  
Then $t_r = \dim(W_r)$,
\[  A = W_1 \oplus W_2 \oplus \ldots \oplus W_e\]
and 
\[ t_1 + t_2 + \ldots + t_e = n.\]

\begin{proposition}\label{lower}
   \[ i(A) \ge  \lb(A) := s(t_1) + (s(t_2) -1) +
         \ldots + (s(t_e)-1).\]
\end{proposition}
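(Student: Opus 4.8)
The plan is to exhibit an explicit family of ideals of $A$ indexed by subspaces of the graded pieces $W_r$, and to show that this family has exactly $\lb(A)$ distinct members. The single fact driving everything is the containment $A\cdot N_r \subseteq N_{r-1}$ for every $r$ (with the convention $N_0 = (0)$), which is immediate from the definition of the annihilator ideals: if $a \in N_r$ and $x \in A$, then for all $x_1, \ldots, x_{r-1}$ in $A$ we have $x_1 \cdots x_{r-1}(xa) = (x_1 \cdots x_{r-1}x)a = 0$, since $a$ is killed by every product of $r$ elements of $A$; hence $xa \in N_{r-1}$.

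From this I would first note that every subspace $U$ of $N_1$ is automatically an ideal, because $AU \subseteq AN_1 \subseteq N_0 = (0) \subseteq U$; this produces $s(t_1)$ ideals. Next, for each $r$ with $2 \le r \le e$ and each subspace $U \subseteq W_r$, I claim $I_U := U + N_{r-1}$ is an ideal. Indeed $A I_U \subseteq AU + AN_{r-1} \subseteq N_{r-1} + N_{r-1} = N_{r-1} \subseteq I_U$, using $U \subseteq W_r \subseteq N_r$ together with the key containment for the first summand and the fact that $N_{r-1}$ is itself an ideal for the second. Letting $U$ range over the nonzero subspaces of $W_r$ gives $s(t_r) - 1$ such ideals in the $r$-th family.

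The remaining, and really the only delicate, task is to check that all the ideals produced are pairwise distinct, so that the counts add up to $\lb(A) = s(t_1) + \sum_{r=2}^{e} (s(t_r) - 1)$. For distinctness within a single family ($r \ge 2$), I would observe that $W_r \cap N_{r-1} = (0)$ forces $I_U \cap W_r = U$, so distinct choices of $U$ give distinct ideals $I_U$; the same is trivial inside $N_1$. For distinctness across families, I would attach to each ideal $I$ in the list the integer $\min\{r : I \subseteq N_r\}$. A nonzero subspace of $N_1$ has this invariant equal to $1$, while for $2 \le r \le e$ and $U \ne (0)$ the ideal $I_U$ satisfies $I_U \subseteq N_r$ but $I_U \not\subseteq N_{r-1}$ (because a nonzero $U \subseteq W_r$ meets $N_{r-1}$ only in $(0)$, so $U \not\subseteq N_{r-1}$), whence its invariant is exactly $r$. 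Since this invariant separates the families and the zero ideal (invariant $0$) occurs only once, all listed ideals are distinct.

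Adding the three contributions $s(t_1)$ and $s(t_2)-1, \ldots, s(t_e)-1$ then yields $i(A) \ge \lb(A)$. I expect the main obstacle to be purely bookkeeping: one must ensure that the boundary ideals $N_{r-1}$ (the $U = (0)$ case) are not counted twice, which is exactly why each higher family contributes $s(t_r) - 1$ rather than $s(t_r)$; the invariant argument above is the clean way to verify that no further collisions occur.
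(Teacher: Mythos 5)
Your proposal is correct and follows essentially the same route as the paper: you construct exactly the same family of ideals $N_{r-1}+U$ for nonzero $U\subseteq W_r$ (plus the subspaces of $N_1$), justified by the same key containment $AN_r\subseteq N_{r-1}$. The only difference is that you spell out the distinctness bookkeeping (via $I_U\cap W_r=U$ and the invariant $\min\{r: I\subseteq N_r\}$) which the paper's proof leaves implicit in the remark that $\lb(A)$ ``simply counts'' the ideals described.
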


\begin{proof}  
  For each $r$, $1 \le r\le e$, and each non-zero subspace $V_r$ of $W_r$, 
let $J = N_{r-1} + V_r$.  Then $J$ is an ideal of $A$.  
Indeed, we have $AV_r \subset AN_r \subset N_{r-1}$,
and therefore $AJ \subset AN_{r-1}+N_{r-1} \subset N_{r-1} \subset J$.		
	
The formula $\lb(A)$ of the proposition simply counts the number 
of ideals $J$ just described. \end{proof}  

Since for any $m$,
 \[ s(m) = \sum_{k=0}^m \gb{m}{k}\]
 and $\gb{m}{k} \ge p^{(m-k)k}$, we can let $t_M = \max{t_k}$ and get 
a rough lower bound for the number of ideals in $A$:
 \[ i(A) \ge p^{\delta(t_M)}.\]

 
\section{Some classes of examples}

To see how sharp the bounds on ideals are that we obtained in the last two sections, we look at some explicit classes of algebras. 

\begin{example}  First, consider the  ``uniserial'' $e$-dimensional
algebra $A$ generated by $x$ with $x^{e+1}=0$.  In this case,  for every element 
$u$ in $N_t \setminus N_{t-1}$, the dimension $q(x) = \dim(G(\{x\})$ is
equal to $t$.   We then see that the general upper bound 
\[ i(A) \le (2e-1)p^{-\delta(e)}s(A)\]
is in fact close to the true number $i(A)=e+1$ for large $p$, 
since  $s(A)$ is a polynomial in $p$ of degree $\delta(e)$. 

The lower bound $\lb(A)$ in this simple class of examples is $e+1$.  
\end{example}

\begin{example} \label{binom4}
Let $A$ be a ``binomial'' nilpotent algebra:  
$A = \langle x_1, x_2, \ldots x_ e \rangle$ with $x_k^2 = 0$ for all $k$.  
Then $\dim(A) = 2^e -1$, $\Ann(A) = (x_1x_2\cdots x_e)$ and 
$\dim(N_t/N_{t-1}) = \binom e{t-1}$.
 
 Theorem 2.6 tells us that the ratio of ideals to subspaces for $A$ is 
bounded as follows:
 \[ \frac{i(A)}{s(A)} \le \frac {2e-1}{p^{\delta(e)}} 
  = \frac {2e-1}{p^{\lfloor \frac {e^2}4 \rfloor}}.\]
But that inequality arose from minorizing  
$q_t = \min_{x \in N_t \setminus N_{t-1}} \dim(G(x))$ by $t$ throughout.
In this class of examples we can do better, having a closer
look at $q_t$.

\begin{proposition} \label{4.3} Let $A$ be the binomial algebra  of dimension $2^e -1$.  
Then for every non-zero $u$ in $N_t \setminus N_{t-1}$ we have
 \[ \dim(G(u)) \ge 2^{t-1}.\]
\end{proposition}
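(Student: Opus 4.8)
The plan is to exploit the explicit monomial basis of the binomial algebra. Write $x_S = \prod_{i \in S} x_i$ for each subset $S \subseteq \{1,\dots,e\}$; since each generator squares to zero and the generators commute, the nonzero monomials $x_S$ with $S \ne \emptyset$ form a basis of $A$, with $x_S x_T = x_{S\cup T}$ when $S \cap T = \emptyset$ and $x_S x_T = 0$ otherwise. The first step is to describe the annihilator filtration in these terms: a monomial $x_S$ with $|S| = d$ lies in $N_t$ exactly when one cannot find $t$ further distinct indices outside $S$, that is, when $e - d < t$, i.e.\ $d \ge e - t + 1$. Hence $N_t$ is spanned by the monomials of degree $\ge e - t + 1$, and $u \in N_t \setminus N_{t-1}$ precisely when the lowest-degree homogeneous component of $u$ has degree exactly $e - t + 1$.

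Given such a $u$, I would let $u_0$ be its homogeneous component of degree $e - t + 1$; by the previous step $u_0 \ne 0$, so I may fix a subset $S_0$ with $|S_0| = e - t + 1$ whose coefficient $c_{S_0}$ in $u_0$ is nonzero. Its complement then satisfies $|S_0^c| = t - 1$, so $S_0^c$ has exactly $2^{t-1}$ subsets $T$. The crux of the argument is to show that the $2^{t-1}$ elements $x_T u$ (for $T \subseteq S_0^c$, with the convention $x_\emptyset u = u$ since $A$ has no identity), all of which lie in $G(u) = \Fp u + Au$, are linearly independent.

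For the independence I would track one \emph{leading} monomial for each $T$, namely $x_{S_0 \cup T}$; these are pairwise distinct as $T$ ranges over the subsets of $S_0^c$. Two observations drive a triangularity argument. First, since $x_{T'} u$ is a combination of monomials $x_{S \cup T'}$ with $S \cap T' = \emptyset$, every monomial occurring in $x_{T'} u$ contains $T'$; together with $T' \subseteq S_0^c$ this forces $x_{S_0 \cup T}$ to occur in $x_{T'} u$ only when $T' \subseteq T$. Second, a degree count shows that $x_{S_0 \cup T}$, having degree $(e-t+1) + |T|$, can receive a contribution only from the bottom component $u_0$, and its coefficient there works out to be exactly $c_{S_0} \ne 0$. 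Feeding a vanishing relation $\sum_{T} \lambda_T\, x_T u = 0$ into these facts and inspecting the coefficient of $x_{S_0 \cup T}$ for a $T$ that is minimal with $\lambda_T \ne 0$ yields $\lambda_T c_{S_0} = 0$, a contradiction; hence all $\lambda_T$ vanish and $\dim(G(u)) \ge 2^{t-1}$.

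The step I expect to demand the most care is the verification that the $x_{S_0 \cup T}$ genuinely form a triangular system: concretely, observation (a) that $x_{S_0 \cup T}$ appearing in $x_{T'} u$ forces $T' \subseteq T$, and observation (b) that only $u_0$ feeds into $x_{S_0 \cup T}$, with coefficient exactly $c_{S_0}$. Both reduce to elementary set-theoretic bookkeeping about which disjoint unions produce $S_0 \cup T$, but they must be checked precisely; once they are in place, the minimal-element argument is routine.
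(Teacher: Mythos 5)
Your proof is correct and takes essentially the same approach as the paper's: both select a minimal-degree monomial summand $x_{S_0}$ of $u$ (the paper's $y$, after renumbering), multiply $u$ by the $2^{t-1}$ monomials supported on the complementary variables, and establish linear independence of the resulting family by tracking distinguished (leading) monomials. The paper packages the independence step in a global monomial ordering with ``admissible families,'' whereas your minimal-$T$ coefficient inspection of $x_{S_0\cup T}$ is the same triangularity mechanism written out directly.
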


\begin{proof}  For any given
$u$ in $N_t \setminus N_{t-1}$, pick a monomial summand $y$ of $u$ in 
$N_t \setminus N_{t-1}$.  Renumber the variables of $A$ so that
 \[ y = x_1x_2 \cdots x_{e-t+1}.\]     
Then we introduce an ordering on the set
of all nonzero monomials of $A$ so that any monomial of $N_{k-1}$ 
comes after any monomial of $N_k\setminus N_{k-1}$ for all $k$,  
and the monomials within $N_k \setminus N_{k-1}$ are ordered lexicographically.
Strictly speaking, this is a total ordering on the set of all monomials
up to multiplication with a nonzero scalar in $\Fp$. 

Call a family of monomials admissible if no two of them are equal
up to a nonzero scalar. Every nonzero $z\in A$ has a unique ``leading''
monomial $m(z)$, according to the ordering. The following is easy
to see: if $(z_i)_{i\in I}$ is a family of elements of $A$, such
that the family of leading monomials $(m(z_i))_i$ is admissible,
then $(z_i)_i$ is $\Fp$-linearly independent. If $w$ is any monomial,
we have $m(zw)=m(z)w$.

Now consider the family $F$ of monomials that consist only of factors
$x_{e-t+2},\ldots, x_e$; this family has $2^{t-1}$ entries, and is
of course admissible. If we multiply every element of this family
by $u$, the leading terms just get multiplied by the monomial $y$,
so they again are an admissible family. Hence the entries of the
family $uF$ are again linearly independent, which shows that the
ideal $G(u)$ generated by $u$ has dimension at least $2^{t-1}$.
   \end{proof}

We illustrate how working with  $q_t \ge 2^{t-1}$ instead of 
the crude lower bound $q_t\ge t$ affects the upper bound 
on the ratio $i(A)/s(A)$ of Theorem \ref{main} for a binomial algebra.

Consider the binomial algebra $A = \langle x_1, x_2, x_3, x_4 \rangle$ 
with $x_i^2 = 0$.  Then 
\[ \dim(N_1) = 1, dim(N_2) = 5, \dim(N_3) = 11, \dim(N_4) = 2^4-1 = 15.\]
(Note $N_4=A$.) The general inequality \ref{main} gives
\[ \frac {i(A)}{s(A)} \le \frac 7{p^4}.\]
Let us start afresh. From Corollary \ref{3.2} we have
\[ i(A) \le \sum_{t=1}^{3} (p^{-\delta(q_t)}-p^{\delta(q_{t+1})})s(N_t) 
 + p^{-\delta(4)}s(A). \]
Omitting the negative terms gives
\[ i(A) \le \frac 1{p^{\delta(q_1)}}s(N_1) + \frac 1{p^{\delta(q_2)}}s(N_2)  
 + \frac 1{p^{\delta(q_3)}}s(N_3)  + \frac 1{p^{\delta(q_4)}}s(N_4) .\]
Now $\delta(q_1) = \delta(1) = 0$ and for $t > 1$, $\delta(q_t) \ge
\delta(2^{t-1}) = 2^{2t-4}$. So we have
\[ i(A) \le s(1) + \frac 1p s(5)  + \frac 1{p^4} s(11)  + \frac 1{p^{16}}s(15). \]
Now we use Lemma \ref{4.1} d):
\beal s(15) &\ge p^{\delta(15) - \delta(1)} s(1) = p^{56} s(1);\\
s(15) & \ge  p^{\delta(15) - \delta(5)} s(5) = p^{50} s(5);\\
s(15) &\ge p^{\delta(15) - \delta(11)} s(11)= p^{26} s(11).
\eeal
So
\[ \frac {i(A)}{s(A)} \le (\frac 1{p^{56}} + \frac 1{p^{51}} 
  +\frac 1{p^{30}} +\frac 1{p^{16}}) \le \frac 2{p^{16}}.\]
This is a big improvement over the inequality above that comes from the general
approach.

However, the lower bound $\lb(A)$ on the number of ideals of $A$ from Proposition 
\ref{lower} is a polynomial in $p$ of degree 9, while 
\[ \frac 2{p^{16}}s(15) > \frac 2{p^{16}}\gb{15}{7}  > \frac 2{p^{16}} p^{56} = 2p^{40}.\] 
So there remains a large gap between the upper and lower bounds on $i(A)$.
\end{example}
In general, the gap between the upper and lower bounds for $i(A)$ arises because 
the upper bound is based on a lower bound on the sizes of fibers of the ideal generator 
map 
\[G: (\text{ subspaces of }A )\to \text{ (ideals of  }A).\]
For $J$ an ideal of $N_k$, not in $N_{k-1}$, we showed that $|G^{-1}|(J)| 
\ge p^{\delta(q_k)}$ where $q_k$ is the minimum of the dimensions of \emph{principal} 
ideals $G(x)$ for $x$ in $N_k\setminus N_{k-1}$.  But for many nilpotent algebras 
$A$  and many ideals $J$ of $A$, this lower bound greatly underestimates $|G^{-1}(J)|$.  
We illustrate this with two examples.

\begin{example}
Let $A$ be the ``triangular'' algebra $A = \langle x, y \rangle$ with $A^{e+1} = 0$.  
Here one sees that  $q_u = \frac {t(t+1)}2$ for $u$ in $N_t \setminus N_{t-1}$.    
Let us look at the case $e = 2$ in detail.

Let $A = \langle x, y \rangle$ with $A^3 = 0$.  
Then $A$ has a basis $\mathcal{B} = (x, y, x^2, xy, y^2)$, and the annihilator 
$N_1=\Ann(A)$ has basis $x^2, xy, y^2$. Moreover $N_2=A$.  
So we have $d_1=3$ and $d_2=5$.

\begin{proposition}\label{idcount}
There are $3p^2 + 4p + 6$ ideals in $A$.  
\end{proposition}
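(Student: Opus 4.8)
The plan is to stratify the ideals $J$ of $A$ by the dimension of the image $\bar J$ of $J$ in the two-dimensional quotient $A/N_1$, noting that $N_1 = \Ann(A) = A^2$ has $\Fp$-basis $x^2, xy, y^2$ and that $A \cdot N_1 \subseteq A^3 = 0$. Because $N_1$ is annihilated by $A$, every subspace of $N_1$ is automatically an ideal, so the stratum $\dim \bar J = 0$ contributes exactly $s(3) = 2p^2 + 2p + 4$ ideals (all subspaces of the $3$-dimensional space $N_1$, the zero ideal included). At the other extreme, if $\dim \bar J = 2$ then $J + N_1 = A$; choosing $u, v \in J$ with $u \equiv x$, $v \equiv y \pmod{N_1}$ and computing $xu = x^2$, $yu = xv = xy$, $yv = y^2$ (all cross terms with $N_1$ vanishing) forces $N_1 \subseteq J$, whence $J = A$. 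So this stratum contributes a single ideal.

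The substance of the argument is the middle stratum $\dim \bar J = 1$. There are $p+1$ lines $\Fp \bar w$ in $A/N_1$, and $\GL_2(\Fp)$, acting on $\langle x, y\rangle_{\Fp}$ and on $N_1$ through $\mathrm{Sym}^2$, acts by algebra automorphisms and permutes these lines transitively; hence it suffices to count the ideals lying over one line, say $\Fp\bar x$, and multiply by $p+1$. (Alternatively one checks the count directly for a general line $\Fp(\alpha x + \beta y)$.) Any ideal over $\Fp \bar x$ has the form $J = \Fp u \oplus M$ with $u = x + n$, $n \in N_1$, and $M = J \cap N_1 \subseteq N_1$, where $n$ is determined only modulo $M$. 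The key computation is $Au = Ax = \langle x^2, xy\rangle_{\Fp}$, the term $An$ vanishing since $An \subseteq AN_1 = 0$, so the ideal condition $AJ \subseteq J$ reduces to the single containment $Ax \subseteq M$. As $Ax$ is a fixed $2$-dimensional subspace of the $3$-dimensional $N_1$, the only admissible choices are $M = Ax$ and $M = N_1$.

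It then remains to count, for each admissible $M$, the ideals $J$ over $\Fp \bar x$ with $J \cap N_1 = M$. Such a $J$ equals $\Fp(x+n) \oplus M$ and is determined precisely by the coset $n + M \in N_1/M$, giving $p^{3 - \dim M}$ ideals; I would verify the bijection in both directions, namely that distinct cosets give distinct ideals and that every such $J$ recovers its $M$ as $J \cap N_1$. Thus $M = Ax$ yields $p$ ideals and $M = N_1$ yields the single ideal $\Fp x + N_1$, for $p+1$ ideals over each line and $(p+1)^2$ in the middle stratum. Summing the three strata gives $(2p^2 + 2p + 4) + (p+1)^2 + 1 = 3p^2 + 4p + 6$, as claimed. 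The main obstacle I anticipate is the bookkeeping in this middle stratum: pinning down that the ideal condition collapses to the single containment $Ax \subseteq M$, and confirming that the coset count $p^{3-\dim M}$ is exact rather than a mere bound.
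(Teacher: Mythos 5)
Your proof is correct and takes essentially the same approach as the paper: classify ideals $J$ by their image $\bar J$ in $A/N_1$, reduce to the single line $\Fp\bar x$ via automorphisms of $A$, and count the ideals lying over it. Your bookkeeping via $M = J\cap N_1 \supseteq Ax$ and cosets $n+M$ is just a more systematic phrasing of the paper's observation that such an ideal either contains $y^2$ (giving $\Fp x + N_1$) or contains $x+ay^2$ for a unique scalar $a$, and both routes give $(p+1)^2$ ideals in the middle stratum and the total $3p^2+4p+6$.
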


\begin{proof} The lower bound $\lb(A)$ from Proposition \ref{lower} counts ideals of 
$N_1$ and ideals of $A$ properly containing $N_1$:  that number is 
\beal  \lb(A) &= s(t_1) + s(t_2) -1 \\&= s(3) + s(2) -1\\
&= (2p^2 + 2p + 4) + (p+2) \\&= 2p^2 + 3p + 6.\eeal
To determine the number of ideals of $A$ we let $\bar A = A/N_1$. This is 
the two-dimensional algebra spanned
by $\bar x$ and $\bar y$ with zero multiplication. We classify ideals $J\subset A$
by their image $\bar J$ in $\bar A$. Those with $\bar J=0$ are simply the
subspaces of $N_1$, which we've already counted.  
One easily sees that $\bar J=\bar A$ only happens once, for $J=A$, and since that ideal contains $N_1$, it is already counted.  

There remains the case where $\bar J$ is one-dimensional. There are
$p+1$ one-dimensional subspaces of $\bar A$, but by applying suitable
automorphisms of $A$ it suffices to count ideals with $\bar J=\Fp \bar x$, 
and multiply that count by $p+1$. All such $J$ contain $x^2$ and $xy$, so the
question is whether they contain $y^2$. If yes, $J$ is simply the
linear span of $x$ and $N_1$ and has been counted. If no, then $J$
contains an element $x+ay^2$ for a unique scalar $a\in \Fp$;
this scalar determines the ideal.  So there are $p$ such ideals mapping onto 
$\Fp \bar x$.   Thus the count of ideals $J$ with $\bar J$ one-dimensional is $p(p+1)$.  Adding that number to $L(A)$ gives the result.
\end{proof}

We can write down all of the ideals explicitly and determine their fibers.  
The notation $(m)$ denotes ``subspace generated by $m$''.  In the list, 
$a, b, d$ are arbitrary elements of $\Fp$.

\beal &A  = G(x, y)\\
& J_1 = J_1(a, d) = G(x + ay + dy^2)  = (x + ay + dy^2, x^2 -a^2y^2, xy + ay^2) \\
&J_{15} = J_{15}(a) = G( x + ay, y^2) = (x + ay, x^2, xy, y^2) \\
&J_2 = J_2(b) = G(y + bx^2) = (y + bx^2, xy, y^2)\\
&J_{23} = G(y, x^2) = (y, x^2, xy, y^2)\\
&\text{and finally all subspaces of  the ideal }N_1 = (x^2, xy, y^2). \eeal

To describe the subspaces of $A$, choose the basis $(x, y, x^2, xy, y^2)$ of $A$.  
Looking at row vectors of coordinates with respect to that basis yields a bijection 
between subspaces of $A$ and row spaces of  $5 \times 5$ matrices 
with entries in $\Fp$.  Those row spaces are in bijective correspondence with the set of 
$5 \times 5$ reduced row echelon matrices.  Those, in turn, can be categorized 
by specifying the columns where the pivots occur: the number of pivots specified 
defines the dimension of the subspace.  Thus the label (124) denotes the 
$5 \times 5$ reduced row echelon matrix
\[ \bpm 1&0& \cdot & 0 & \cdot\\0&1& \cdot & 1& \cdot\\0&0&0& 1 & \cdot \epm\]
(we omit all rows of zeros),  where the five unspecified entries can be 
arbitrary elements of $\Fp$.  Thus there are $p^5$ subspaces of $A$ corresponding 
to echelon forms with label (124).

The echelon forms (3), (4), (5), (34), (35), (45), (345) define the non-zero subspaces 
of $N_1$. Those subspaces are also ideals of $A$ since multiplication on $N_1 = Ann(A)$ is trivial.  

Every echelon form that includes both 1 and 2 defines a subspace of $A$ that generates the ideal $A$.   It is possible to discuss all other forms in turn, finding the ideals
generated by the corresponding subspaces and the exact size of the fiber of $G$.
Since this is repetitive and space-consuming, we only write out what happens
for three echelon forms. 

(1)  has the form $(1, a', b', c', d')$ and generates $J_1(a, d)$ for $a=a'$ and 
$d = d' + b'a'^2 -c'a'$.  So for each $(a, d)$ there are $p^2$ subspaces  of 
type (1) that generate $J_1(a, d)$.
 
(13)  has the form $\bpm 1& a' &0 &c' & d'\\0&0&1&e'&f'\epm$.  If $a' = a$, 
$d' -c'a +e'a^2 =  d$ and $f '= e'a -a^2$ , then it generates $J_1(a, d)$.    
In that case,  for each $(a, d)$ there are $p^2$ subspaces of type (13) that 
generate $J_1(a, d)$.  
If $a' = a$ and $f '\ne e'a -a^2$, then the subspace generates $J_{15}(a)$.  
In that case the choices for $(c', d', e', f')$ yield $p^3(p-1)$  subspaces 
of type (13) that generate $J_{15}(a)$.

(14) has the form  $\bpm 1& a' &b' &0 & d'\\0&0&0&1&f'\epm$. If $f' = a' = a$ 
then the subspace generates $J_1(a,  d)$ for all $p$ choices of $b'$; otherwise 
for $a' = a \ne f'$ there are $p^2(p-1)$ choices of $(b', d', f')$ for subspaces 
of type (14) that generate $J_{15}(a)$.

As noted, we omit the (easy) discussion of the remaining forms (15), (134), (135),
(2), (23), (234), (235), (2345),  (24), (25), (245).

Adding up the number of subspaces that generate each ideal, we get the tables below.  
Here $a, b, c$ are arbitrary elements of $\Fp$.
 \begin{center} \begin{tabular}
{c|c|c}
ideals &\# of ideals & fiber size\\ \hline
$A$&$ 1 $&$ 2p^6 + p^5 +2p^4 + p^3 + p^2 + 1$\\
$J_1(a, d) $&$p^2$&$2p^2 + p + 1  $\\ 
$J_{15}(a) $&$p$&$ p^4 + p^3  + p^2 + 1 $\\ 
$J_2(b) $&$p$&$2p^2 + p + 1  $\\ 
$J_{23}  $&$1$&$ p^4 + p^3 + p^2 + 1 $\\
ideal of $N_1$ &$2p^2 + 2p + 4$&$ 1 $\\ 
\end{tabular}\end{center}
The center column sums to the number of ideals of $A$.

The total number of subspaces of $A$ accounted for by fibers of ideals of each type is:
 \begin{center} \begin{tabular}
{c|c}
ideals & \# subspaces\\\hline
$A$&$2p^6 + p^5 +2p^4 + p^3 + p^2 + 1$\\
$J_1(a, c)$&$2p^4 + p^3 + p^2  $\\ 
$J_{15}(a) $&$ p^5 + p^4  + p^3 + p $\\ 
$J_2(b) $&$2p^3 + p^2 + p  $\\ 
$J_{23} $&$ p^4 + p^3 + p^2 + 1 $\\
ideal of $N_1$ &$2p^2 + 2p + 4$\\ 
\end{tabular}\end{center}
The right column sums to $s(5)=$ the number of subspaces of $A$.

Let us compare this with our more general results.  We have
\[ s(5)=2p^6+2p^5+6p^4+6p^3+6p^2+4p+6.\]
Given that $i(A)=3p^2+4p+6$ by Prop.~\ref{idcount},
the inequality of Proposition \ref{main3}  comes out as 
\[ 3p^2+4p+6 \le 4p^5+4p^4+12p^3+12p^2+12p+8+12p^{-1}. \]
This inequality was based on assuming that every  ideal $J$ not contained in 
$N_1$ has dimension $\ge 2$, and so $|G^{-1}(J)| \ge p^{\delta(2)} = p$ . 

In Corollary \ref{3.2bis}, the factor in brackets between $\le$
and $s(A)$ evaluates to $2p^{-4}+p^{-2}$, using $d_1=3, d_2=5$
and $q_1=1$, $q_2=3$.  This assumed that  every ideal $J$  not contained in 
$N_1$ has dimension $\ge 3$, so  $|G^{-1}(J)| \ge p^{\delta(3)} = p^2$.  
Then the inequality is
\[  3p^2+4p+6 \le 2p^4+2p^3+10p^2+10p+18 + r(p),  \]
where $r(p)=12p^{-4}+8p^{-3}+18p^{-2}+16p^{-1}$ is always positive
but tends to 0 for $p\to\infty$.

Looking at the actual sizes of the fibers of $G$ in this example, the 
inequality $|G^{-1}(J)| \ge p^2$ has the correct power of $p$ for principal 
ideals $J$.  But the non-principal ideals $J_{23}$, $J_{15}(a)$ and $A$ that are not contained in $N_2$ have 
fibers with cardinalities of order $p^4, p^4$ and $p^6$, respectively.  
This helps explain why the general upper bound on ideals is loose.    
\end{example}

\begin{example} Let $A = \langle x, y, z \rangle$ with $x_i^2 = 0$, the binomial algebra in three variables.    Then $A^4 = 0$  ($e = 3$) and 
$A$ has a basis 
\[ (x, y, z, xy, xz, yz, xyz)\]
with $N_1 = (xyz), N_2 = ( xy, xz, yz, xyz)$.  
The number of subspaces of $A$ is 
\[ s(7) = 2p^{12} + 2p^{11} + 6p^{10} + 8p^9 + \text{  terms in }p\text{ of lower degree}.\]
The number of ideals of $A$ turns out to be 
\[ i(A) = 7p^2 + 4p + 8.\] 
 The lower bound  on $i(A)$, the number of ideals of $A$, is 
\[ \lb(A) = s(1) +(s(3)-1) + (s(3) -1)  = 4p^2 + 4p + 8.\]
An  upper bound on $i(A)$ can be obtained by using Proposition \ref{4.3}, which says that the dimension of a principal ideal of $A$ not contained in $N_2$ is at least 4.  Then we get 

\beal
i(A) &\le (\frac 1{p^{\delta(7) - \delta(1)}} + \frac 2{p \cdot p^{\delta(7) -\delta(4)}} + \frac 1{p^4})s(A)\\
&= (\frac 1{p^{12}} + \frac 2{p^9} + \frac 1{p^4})s(A)\\
&\le \frac 2{p^4} s(A) \sim 4p^8 + 2p^7 + \ldots .\eeal
To see why this upper bound on $i(A)$ is off by a factor of a constant times $p^6$, we can determine $|G^{-1}|$ for the ideals of $A$, by methods in the last example.  We omit the details.  But we observe first that the fibers of the $2p^2 + 3p + 4$ ideals of $N_2$ account in total for $s(4) = p^4+ 3p^3 + 4p^2 + 3p + 5$ subspaces of $A$.  So most subspaces of $A$ generate ideals not contained in $N_2$.  

In obtaining our upper bound, we used that for principal ideals of $A$ not contained in $N_2$, $|G^{-1}(J)| \ge p^4$.   But in fact, we find that:

\tb  For the $p^2 + p + 1$ principal ideals $J$ of the form $G(x + by+ cz)$ with $bc \ne 0$ in $\Fp$, the dimension of $J \ge p^5$, so $|G^{-1}(J)| \ge p^{\delta(5)} = p^6$, not $p^4$.  Thus this set of ideals is generated by approximately $p^8$ subspaces of $A$. 

\tb  For the $p^2 + p+ 1$ non-principal ideals of the form
\[ G(x+bz, y + cz), G(x + by, z), G(y, z),\]
each is generated by at least $p^9$ subspaces of $A$.  Thus this set of ideals is generated by approximately $p^{11}$ subspaces of $A$. 
 
\tb Finally, for the ideal $A = G(x, y, z)$ itself, every subspace of $A$ whose reduced row echelon form has the form $(123\ldots)$ generates $A$, and summing the number of such subspaces yields 
\[|G^{-1}(A)| \ge 2p^{12} + p^{11} + 2p^{10} + 2p^9 + \ldots .\] 

Comparing that to $s(7) = s(A)$ above, it is evident that the weakness in the upper bound we found for $i(A)$ arises from the considerable underestimation of the size of $G^{-1}(J)$ for non-principal ideals  not contained in $N_2$, and, in particular, on the size of $G^{-1}(A)$:  $|G^{-1}(A)|$ is a polynomial in $p$ of the same degree as $s(A)$.

This last fact turns out to be true in general.   One can show (proof omitted) that $|G^{-1}(A)|$
is always a polynomial in $p$ with the same degree as the polynomial $s(A)$,
under the fairly mild assumption that $A$ as an $\Fp$-algebra is generated
by at most $\dim(A)/2$ elements. 
\end{example}

From these examples it appears that any substantial tightening of 
the upper bound for the ideals of $A$ will require a more nuanced look 
at the fibers of non-principal ideals whose $\Fp$-dimension is close to the dimension of $A$. 

However, the primary objective of this paper has been achieved.  
Let  $L/K$ be a Galois extension  with elementary abelian Galois group 
an elementary abelian $p$ group $G$.  If $L/K$ is a $H$-Hopf Galois 
extension of type $G$ corresponding to a commutative nilpotent algebra 
structure $A$ on $G$ with $A^p = 0$,  then the upper bound on $i(A)/s(A)$ in section 2, weak as it may be for some examples, still provides the first general quantitative estimate on how far from surjective is the Galois correspondence for the Hopf Galois structure on $L/K$.



\begin{thebibliography}{[CDVS06]}

\bibitem
[CDVS06]{CDVS06}   A. Caranti,  F. Dalla Volta, M. Sala, Abelian regular subgroups 
of the affine group and radical rings, \it Publ. Math. Debrecen  \bf69  
\rm(2006),  297--308. 

\bibitem
[CS69]{CS69}  S. U. Chase, M. E. Sweedler, \it Hopf Algebras and Galois Theory, 
\rm Springer LNM 97 (1969).

\bibitem
[Ch15]{Ch15} L. N. Childs, On abelian Hopf Galois structures and finite 
commutative nilpotent rings, \it New York J. Math. \bf21 \rm(2015),  205--229. 

\bibitem
[Ch16]{Ch16}  L. N. Childs, Obtaining abelian Hopf Galois structures from 
finite commutative nilpotent rings, arxiv: 1604.05269

\bibitem
[Ch17]{Ch17} L. N. Childs, On the Galois correspondence for Hopf Galois structures, 
\it New York J. Math.  \rm (2017), 1--10.

\bibitem
[CRV15]{CRV15} T. Crespo, A. Rio, M. Vela, From Galois to Hopf Galois: 
theory and practice, \it Contemp. Math. \bf 649 \rm(2015), 29--46.

\bibitem
[CRV16]{CRV16}  T. Crespo, A. Rio, M. Vela, On the Galois correspondence 
theorem in separable Hopf Galois theory, \it Publ. Mat. (Barcelona) \bf60 
\rm(2016), 221--234.

\bibitem
[FCC12]{FCC12}  S. C. Featherstonhaugh, A. Caranti, L. N. Childs, 
Abelian Hopf Galois structures on prime-power Galois field extensions, 
\it Trans. Amer. Math. Soc. \bf364 \rm(2012), 3675--3684.

\bibitem
[GP87]{GP87} C. Greither, B. Pareigis, Hopf Galois theory for separable 
field extensions, \it J. Algebra \bf 106 \rm(1987),  239--258.

\bigskip

\end{thebibliography}
\end{document}